\newtheorem{theorem}{Theorem}[section]
\newtheorem{remark}{Remark} [section]
\newtheorem{proposition}{Proposition} [section]
\newtheorem{corollary}{Corollary} [section]
\newtheorem{assumption}{Assumption}
\newtheorem{example}{Example}
\def\R{\mathbb{R}}
\def\F{\mathcal{F}}
\def\P{\mathsf{P}}
\def\ds{\mathrm{d}s}
\def\dX{\mathrm{d}X}
\def\dY{\mathrm{d}Y}
\def\dW{\mathrm{d}W}
\def\Q{\mathsf{Q}}
\def\pR{\mathsf{R}}
\def\bR{{\overleftarrow{\mathsf{R}}}}
\def\fP{{\overrightarrow{\mathsf{P}}}}
\def\fPi{\overrightarrow{\mathsf{P}}_i}
\def\eps{\varepsilon}
\def\argmin{\operatornamewithlimits{arg\,min}}
\def\tr{\operatorname{tr}}
\def\ol{\overline}
\def\wh{\widehat}
\def\wt{\widetilde}
\def\E{\mathbf{E}}
\begin{document}

\title{Solving Feynman-Kac Forward Backward SDEs Using McKean-Markov Branched Sampling} 

\author{Kelsey P. Hawkins, Ali Pakniyat, Evangelos Theodorou, 
    and Panagiotis Tsiotras, 
\thanks{K. P. Hawkins is with the Institute for Robotics and Intelligent Machines,
Georgia Institute of Technology, Atlanta, GA 30332--0250 USA (e-mail: kphawkins@gatech.edu)}
\thanks{A. Pakniyat is with the Department of Mechanical Engineering, University of Alabama, Tuscaloosa, AL 35401 (e-mail:
apakniyat@ua.edu)}
\thanks{E. Theodorou, and P. Tsiotras are with the School of Aerospace Engineering and the
 Institute for Robotics and Intelligent Machines,
Georgia Institute of Technology,  Atlanta, GA 30332--0250 USA (e-mail:
evangelos.theodorou@gatech.edu, tsiotras@gatech.edu)}}


\maketitle

\begin{abstract}
    We propose a new  method for the numerical solution of the forward-backward stochastic differential equations \mbox{(FBSDE)} appearing in the Feynman-Kac representation of the value function in stochastic optimal control problems. 
   Using Girsanov's change of probability measures, it is demonstrated how a McKean-Markov branched sampling method can be utilized for the forward integration pass, as long as the controlled drift term is appropriately compensated in the backward integration pass. 
   Subsequently, a numerical approximation of the value function is proposed by solving a series of function approximation problems backwards in time along the edges of a space-filling tree consisting of trajectory samples.
   Moreover, a local entropy-weighted least squares Monte Carlo (LSMC) method is developed to concentrate function approximation accuracy in regions most likely to be visited by optimally controlled trajectories. The proposed methodology is numerically demonstrated on linear and nonlinear stochastic optimal control problems with non-quadratic running
    costs, which reveal significant convergence improvements over previous FBSDE-based numerical solution methods.
\end{abstract}

\begin{IEEEkeywords}
    Least mean square methods,
    Monte Carlo methods,
    Nonlinear control systems,
    Optimal control,
    Partial differential equations,
    Stochastic processes,
    Trajectory optimization, 
    Tree graphs
\end{IEEEkeywords}

\section{Introduction}

\IEEEPARstart{T}{he} Feynman-Kac representation theorem establishes the intrinsic relationship between the solution of a broad class of second-order parabolic and elliptic partial differential equations (PDEs) to the solution of forward-backward stochastic differential equations \mbox{(FBSDEs)} (see, e.g., \cite[Chapter 7]{yong1999stochastic}). 
Investigations over 
these FBSDEs were brought to prominence in \cite{Pardoux1990,peng1993backward,el1997backward}, and they
have been gaining traction as a framework to solve stochastic nonlinear control problems, including optimal control problems with quadratic cost \cite{exarchos2018stochastic}, minimum-fuel ($L_1$-running cost) problems \cite{Exarchos2018}, differential games \cite{Exarchos2016, Exarchos2018a}, and reachability problems \cite{exarchos2018stochastic,mete2002stochastic}. 
FBSDE-based numerical methods have also received interest from the mathematical finance community \cite{Bender2007,Longstaff2001,ma2007forward}. 
This is due to that fact that the Hamilton-Jacobi-Bellman (HJB) second order PDE appearing in stochastic optimal control can be solved via FBSDE methods with general nonlinear dynamics and~costs. 
Although initial results demonstrate promise in terms of flexibility and theoretical validity, numerical algorithms which leverage this theory have not yet matured. 
For even modest problems, state-of-the-art algorithms often have issues with slow and unstable convergence to the optimal policy. 
Producing more robust numerical methods is critical for the broader adoption of FBSDE methods for real-world tasks.


FBSDE numerical solution methods broadly consist of two steps, a forward pass, which generates Monte Carlo samples of the forward stochastic process, and a backward pass, which iteratively approximates the value function backwards in time. 
Typically, FBSDE methods perform this approximation using a least-squares Monte Carlo (LSMC) scheme, which implicitly solves the backward SDE using parametric function approximation \cite{Longstaff2001}. 
The approximate value function fit in the backward pass is then used to improve sampling in an updated forward pass, leading to an iterative algorithm which, ideally, improves the approximation, till convergence. 
Although FBSDE methods share a distinct similarity to differential dynamic programming (DDP) techniques \cite{jacobson1970differential,theodorou2010stochastic,NIPS2007_3297}, 
as they also involve forward and backward passes,
the latter are, in general, less flexible. 
For most DDP applications a strictly positive definite running cost with respect to the control is required for convergence \cite[Section 2.2.3]{tassa2011theory}. 
Furthermore, in DDP the computation of first and second order derivatives of both the dynamics and the cost is necessary for the backward pass, making it challenging to apply this approach to problems where these derivatives cannot be computed analytically.
In contrast, FBSDE techniques only require a good fit of the value function and the evaluation of its gradient.


A key feature of FBSDE methods is their ability to generate a parametric model for the value function over the entire time horizon which, in turn, can be used for the evaluation and assessment of the  performance of closed-loop  control policies. 
This feature differentiates both FBSDE and DDP methods from model predictive control (MPC) methods~\cite{williams2017information}, which, in general, only produce the current-best optimal control signal, re-evaluated at every time step~\cite{garcia1989model}.

FBSDE methods provide an alternative to grid-based methods for solving PDEs, typically
utilizing
finite-difference, finite-element, or level-set schemes, which are known to scale poorly in high dimensional state spaces ($n \geq 4$). 
There is also ample research into the development of meshless methods for solving
    PDEs, such as radial basis function (RBF) collocation and RBF-finite difference (RBF-FD) 
    formulations \cite{Yensiri2017}.
FBSDE methods share significant similarities with these methods, in the sense that the
    value function is approximated by solving the PDE over an unstructured set
    of collocation points.
The primary drawback of RBF methods is that they do not offer an efficient method
    for choosing the collocation points, and since
    it is difficult to know a priori what the best points are, point selection might
    regress into a grid-based method.
Specifically, sufficiently broad and dense sampling of a high-dimensional state space might require
    roughly the same number of collocation points as a grid-based method to be well-conditioned and
   to produce a quality estimate of the value function \cite{Chen2013}.

While Feynman-Kac-based FBSDE methods 
produce an unbiased estimator for the value function associated with HJB~equations, 
a na\"\i ve application of the theory leads to estimators with high variance by producing sample trajectories away from the optimal one.
Recent work has shown that Girsanov's change of probability measures can be employed to make changes to the sampling in the forward pass without adding intrinsic bias to the estimator \cite{exarchos2018stochastic,Exarchos2018,Exarchos2018a}. 
In other words, the drift appearing in the forward SDE as a consequence of the change of the probability measure can be employed to modify the sampling in the forward pass; this, in turn, requires appropriate accommodation for the change of measure and the associated conditional expectations in the backward pass.

In this work we expand upon the above ideas and invoke Girsanov's theorem for Feynman-Kac FBSDEs in a broader setting than that of \cite{exarchos2018stochastic,Exarchos2018,Exarchos2018a} and show that the forward sampling measure can be modified at will; this enables us to incorporate methods from other domains, namely, rapidly-exploring random trees (RRTs)  (see, e.g., \cite{lavalle2001randomized} and the recent survey in \cite{noreen2016optimal}), in order to more efficiently explore the state space during the forward pass.
RRTs are frequently applied to reachability-type motion planning problems, by biasing the samples towards regions of the state space that have low density. 
Using RRTs in the forward sampling allows us to spread samples evenly over the reachable state space, increasing the likelihood that near-optimal samples are well-represented in the forward pass sample distribution. By sampling more efficiently and relying less on incremental approximations of the value function to guide our search, we can achieve faster and more robust convergence than previous FBSDE methods.
In the backward pass, we take advantage of the path-integrated running costs and the estimates of
    the value function to produce a heuristic which weighs paths 
    according to a local-entropy measure-theoretic optimization.
Although local-entropy path integral theory and RRTs have been used together in \cite{arslan2014}, the method of this article is more closely related to the path-integral approach to control \cite{theodorou2010stochastic}.
Our method, similarly, performs forward passes to broadly sample the state space, but, in contrast to~\cite{arslan2014}, it follows each forward pass with a backward pass to obtain an approximation of the value function, and, consequently, obtain a closed-loop policy over the full horizon.

The primary contributions of this paper are as follows:
\begin{itemize}
    \item Providing the theoretical basis for the use of McKean-Markov branched sampling in the forward pass of FBSDE techniques.
    \item Introducing an RRT-inspired algorithm for sampling the forward SDE.
    \item Presenting a technique for concentrating value function approximation accuracy in regions containing optimal trajectories.
    \item Proposing an iterative numerical method for the purpose of approximating the optimal value function and its policy.
\end{itemize}

This paper expands upon the authors' prior work in \cite{CDC2021Submission}, by: 
first, providing missing proofs and adding the details for proving  all for stated theorems;
second, providing a comprehensive discussion of the proposed algorithm; and
third, by providing additional examples that further illustrate the theory 
and motivate the design choices of the proposed work.


The structure of the paper is as follows. 
Section~\ref{sec:HJB} presents the stochastic optimal control (SOC) problem formulation,
    the on-policy value function representation, and the associated 
    family of Hamilton-Jacobi equations. 
Next, Section~\ref{sec:fkgrep} introduces a constructive series of alternative representations of 
    the value function,
    first as the solution of ``on-policy'' FBSDEs which arise from the Feynman-Kac theorem,
    then as the solution of ``off-policy'' FBSDEs which arise from the application of Girsanov's theorem,
    and finally as the minimizer of a local-entropy weighted optimization problem over the off-policy
    FBSDE distribution.
Section~\ref{sec:branchpathlsmc} discusses branched forward SDE sampling and its novel interpretation
    as a discrete approximation of the continuous-time theory of Section~\ref{sec:fkgrep}.
In Section~\ref{sec:fbrrt} we propose the method of forward-backward rapidly exploring random trees 
    for solving SDEs (FBRRT-SDE),
    a particular implementation of the representation introduced in Section~\ref{sec:branchpathlsmc}. 
Finally, in Section~\ref{sec:numresults}
    we apply FBRRT-SDE to three problems
    and demonstrate its ability to solve nonlinear stochastic optimal control problems 
    with non-quadratic running costs.

\section{Hamilton-Jacobi Equation and \\On-Policy Value Function} 
\label{sec:HJB}

In this section, we briefly introduce the stochastic optimal control (SOC) problem
    under consideration and its associated optimal value function,
    as well as the on-policy value function
    and its associated Hamilton-Jacobi PDE.
Given an initial time $t \in [0,T]$ and a complete filtered probability space
    ${(\Omega, \F, \{\F_s\}_{s \in [t,T]}, \P)}$ on which  the $n$-dimensional standard
    Brownian (Wiener) process $W_s$ is defined,
    consider a stochastic system with dynamics governed by
\begin{align}
    \dX_s &= f(s,X_s,u_s)\, \ds + \sigma(s,X_s) \, \dW_s \text{,} & X_t &= x_t \text{,}
    \label{eq:SOCdyn}
\end{align}
over the interval $[t,T]$,
where $X_s$ is an $\R^n$-valued, progressively measurable state process on the interval $s \in [t,T]$,
$u_{[t,T]}$ is a progressively measurable input process on the same interval
taking values in the compact set $U \subseteq \R^m$, and
$f: [0,T] \times \R^n \times U \rightarrow \R^n$, $\sigma: [0,T] \times \R^n \rightarrow \R^{n \times n}$ 
are the Markovian drift and diffusion functions, respectively.

For each $t \in [0,T]$, the cost over the time interval $[t,T]$ 
associated with a given control signal $u_{[t,T]}$ is
\begin{align}
    S_t[x_t,u_{[t,T]}] &:= \int^T_t \ell(s,X_s,u_s) \, \ds + g(X_T) \text{,} 
\end{align}
where $\ell: [0,T] \times \R^n \times U \rightarrow \R_+$ 
is the running cost, and \mbox{$g: \R^n \rightarrow \R_+$} is the terminal cost.
The stochastic optimal control (SOC) problem is to determine, given $(f, \sigma, \ell, g, T)$,
    the optimal value function $V^*: [0,T] \times \R^n \rightarrow \R_+$, defined as
\begin{align}
    V^*(t,x) &= \inf_{u_{[t,T]} \in \mathcal{U}_{[t,T]}}  \E \big [\, S_t[x,u_{[t,T]} ] \,\big ] 
    \tag{SOC} \label{eq:SOC} ,
\end{align}
    where $\mathcal{U}_{[t,T]}$ is a set of admissible control processes satisfying the conditions
    described in the beginning of \cite[Chapter~4, Section~3]{yong1999stochastic}.
Among these conditions, $u_{[t,T]}$ must be progressively measurable,
the SDE
    \eqref{eq:SOCdyn} must admit a unique solution $X_s$ under the control $u_{[t,T]}$, and $f(\cdot, X(\cdot), u(\cdot))$
    must be $L^1$ integrable.
    
Consider the HJB PDE \cite[Chapter~7, (4.18)]{yong1999stochastic}
\begin{align}
    \begin{aligned}
        \partial_t V^* + \frac{1}{2} \tr[\sigma \sigma^\top \partial_{xx} V^*] 
        &+ h^* (t,x,\partial_{x} V^*) = 0 \text{,} \nonumber \\
        V^*(T,x) = &g(x) ,
    \end{aligned} \tag{HJB} \label{eq:hjbpde}
\end{align}
    where,
\begin{align}
    h^*(t,x,p) &:= \min_{u \in U} \{ \ell(t,x,u) + p^\top f(t,x,u)  \} \text{,} 
    \label{eq:hamil}
\end{align}
and where
    $\partial_t V^*$ is the partial derivative with respect to $t$,
    $\partial_x V^*$ is the gradient with respect to state~$x$,
    and $\partial_{xx} V^*$ is the Hessian with respect to state~$x$.
%

\begin{assumption} \label{ass:optviscosity} 
We assume that the following conditions hold for the problem data.

\begin{enumerate}[i)]
    \item $f, \sigma, \ell, g$ are uniformly continuous, Lipschitz in $x$,
    for all $(t, u) \in [0,T] \times U$.
    \item $\sigma$ has sublinear growth in $x$ and $g$ has polynomial growth in $x$.
    \item $\sigma^{-1}$ exists and is bounded. 
\end{enumerate}
\end{assumption}

Under Assumption~\ref{ass:optviscosity},
    there exists a unique viscosity solution $V^*$ to \eqref{eq:hjbpde},
    there exists a unique weak solution $X_s$ to \eqref{eq:SOCdyn} under each admissible control
    process $u_s$,
    and the viscosity solution $V^*$ has the representation \eqref{eq:SOC}
    \cite[Chapter~7, Theorem~4.4]{yong1999stochastic}.
   
In addition to the optimal value function $V^*$, we also seek to find an optimal feedback control policy
    $\pi^*$.
    According to
    \cite[Chapter~5, Definition~6.1]{yong1999stochastic},
    we define an admissible feedback control policy $\pi:[0,T] \times \R^n \rightarrow U$ as a  measurable
    function for which the SDE
\begin{align}
    \dX_s &= f(s,X_s,\pi(s,X_s)) \, \ds + \sigma(s,X_s) \, \dW_s, \quad X_t = x \text{,}
    \label{eq:fsdeorig} 
\end{align}
has a weak solution.
Due to the boundedness of $\sigma^{-1}$, 
    there exists an optimal feedback control policy $\pi^*$, which satisfies
\begin{align}
\hspace*{-3mm}
    \pi^*(s,x) \in \argmin_{u \in U} \{ \ell(s,x,u) + (\partial_x V^*(s,x))^\top f(s,x,u) \} \text{,}
    \label{eq:optpoli}
\end{align}
    with the verification property that $V^*(t,x) = \E[\, S_t[x, \pi^*] \,]$
    \cite[Chapter~5, Theorem~6.6]{yong1999stochastic}.
    



In this paper, instead of a direct solution of \eqref{eq:hjbpde}, we work with a class of arbitrary control policies $\mu$ and their associated ``on-policy'' value functions $V^\mu$, and we use iterative methods to approximate $V^*$ and $\pi^*$. 
We allow more general policies of the form $\mu : [0,T] \times \R^n \times \R^n \rightarrow U$, that is,
$\mu(t,x,z)$ is now a function of not only the state $x$ but also on an auxiliary variable $z$.
    
The Hamilton-Jacobi (HJ) PDE for the on-policy value function corresponding to the policy
$\mu$ is given by
\begin{gather}
    \partial_t V^{\mu} + \frac{1}{2} \tr[\sigma \sigma^\top \partial_{xx} V^{\mu}] 
    + h^\mu(t,x,\sigma^\top \partial_{x} V^{\mu})  = 0 \text{,} \nonumber \\
    V^{\mu}(T,x) = g(x) \text{,} \tag{HJ} \label{eq:hjpde}
\end{gather}
where,
\begin{equation}  \label{eq:hHJ}
\begin{split}
    &h^\mu(t,x,z) \\ 
    &\quad := \ell(t,x,\mu(t,x,z)) + z^\top \sigma^{-1}(t,x) f(t,x,\mu(t,x,z)) .
    \end{split}
\end{equation}


\begin{assumption} \label{ass:onpolicy_easy} 
We will assume that $\mu$ is chosen such that the function $h^\mu$ in \eqref{eq:hHJ} satisfies the following properties:
\begin{enumerate}[i)]
    \item $h^\mu$ is uniformly continuous in $(t,x)$, Lipschitz in $z$.
    \item $h^\mu$ has polynomial growth in $x$.
\end{enumerate}
\end{assumption}

Under Assumptions~\ref{ass:optviscosity} and Assumption~\ref{ass:onpolicy_easy},
\eqref{eq:hjpde} admits a unique viscosity solution
    \cite[Chapter~7, Theorem~4.5]{yong1999stochastic}.
However, for the ease of presentation henceforth we will impose additional regularity assumptions for the solutions of \eqref{eq:hjpde}.

\begin{assumption} \label{ass:onpolicy_strong} 
The PDE \eqref{eq:hjpde} admits a classical solution, that is, $V^\mu$ is continuously differentiable in $t$, and twice continuously differentiable in~$x$.

\end{assumption}

\begin{remark}
Assumption~\ref{ass:onpolicy_strong} is rather strong for many optimal control problems.
For a set of sufficient conditions for Assumption~\ref{ass:onpolicy_strong} to hold, one might refer, e.g., to 
\cite[Chapter~7, Theorem 5.5]{yong1999stochastic} and \cite[Chapter 4, Theorem 4.2, Theorem 4.4]{fleming2006controlled}.
However, it is well-known  that viscosity solutions can be approximated closely by classical solutions (see, for example, \cite[Section~5.2]{yong1999stochastic}).
Subsequently, our approach provides a numerical approximation of a wide class of optimal control problems, even ones that admit a unique viscosity solution.
The numerical examples in Section~\ref{sec:numresults} demonstrate this claim.
\end{remark}

In our approach, 
we use the continuous function $\partial_x V^\mu$ to characterize the policy's auxilary variable $z$,  henceforth 
letting $\mu(t,x) := \mu(t,x,\sigma^\top \partial_x V^\mu(t,x))$.
For notational conciseness we will further denote $f^\mu(t,x) := f(t,x,\mu(t,x))$ and $\ell^\mu(t,x) := \ell(t,x,\mu(t,x))$.
As it will be shown in Section~\ref{sec:interpretation}, under Assumptions~\ref{ass:optviscosity}-\ref{ass:onpolicy_strong},
    the classical solution $V^\mu$ of \eqref{eq:hjpde} has the interpretation
\begin{align}
    V^\mu(t,x) &= \E \bigg [ \int^T_t \ell^\mu(s,X_s) \, \ds + g(X_T) \bigg ] \text{,}
\end{align}
    where $X_s$ satisfies the SDE
\begin{align}
    \dX_s &= f^\mu(s,X_s) \, \ds + \sigma(s,X_s) \, \dW_s, \quad X_t = x \text{.}
\end{align}
Also, in Section~\ref{sec:interpretation} we revisit the case of non-classical viscosity solutions, demonstrating that this
    interpretation is still valid under certain modifications.

\section{Feynman-Kac-Girsanov FBSDE Representation} \label{sec:fkgrep}
    
\subsection{Off-Policy Drifted FBSDEs} \label{sec:OffPolicy}

In language originating from reinforcement learning,
    an ``on-policy'' method learns a value function $V^\mu$ from trajectory samples
    generated by following
    the same policy $\mu$, whereas ``off-policy'' methods learn from trajectory samples generated by following
    a different policy \cite{sutton2018reinforcement}.
Off-policy methods are generally more desirable because disentangling the sampling distribution from the target
    value function being learned allows for broader exploration and thus more rapid convergence to the
    optimal value function.
Following this language, in stochastic control an ``on-policy'' method would sample from the FSDE \eqref{eq:fsdeorig}
    with drift $f^\mu$ to learn $V^\mu$, but an ``off-policy'' method would sample from a different FSDE to learn $V^\mu$.
In this section we present an off-policy stochastic control method for representing $V^\mu$.
The on-policy version of the theory then arises naturally from a particular specialization.

The off-policy method utilizes the connection between the solution of a pair of FBSDEs
    and the on-policy value function $V^\mu$ solving \eqref{eq:hjpde}.
We first introduce a class of off-policy FBSDEs, and then provide a theorem establishing their
    connection to $V^\mu$.
We call this class of FBSDEs \textit{off-policy} because the drift term of the FSDE
    is a random process that can be chosen at will.

As before, let ${(\Omega, \F, \{\F_t\}_{t \in [0,T]}, \P)}$ be a complete, filtered probability space,
    and let $W^\P_s$ be a Brownian process in the measure $\P$.
 Denote $f^\mu_s := f^\mu(s,X_s)$, and similarly
    for $\ell^\mu_s, \sigma_s$, and let
    $\E_\P$ refer to the
    expectation taken in the probability measure $\P$.
Further, let $K_s$ be an arbitrary $\F_s$-progressively measurable process such that 
    $\sigma^{-1}_s K_s$
    satisfies
Novikov's criterion
($\E_\P[\exp(\sfrac{1}{2} \int_0^T \|\sigma^{-1}_s K_s\|^2 \, \ds)] < \infty$)\cite[Lemma~9]{Lowther2010}.
We call the pair of FBSDEs
\begin{align}
    \dX_s &= K_s \, \ds + \sigma_s \, \dW^\P_s, & X_t &= x \text{,}
    \label{eq:driftfsde} \\
    \dY_s &= -(\ell^\mu_s + Z^{\top}_s D_s) \, \ds + Z^{\top}_s \dW_s^{\P}, & Y_T &= g(X_T)  \text{,}
    \label{eq:driftbsde}
\end{align}
    where,
\begin{align}
    D_s &:= \sigma_s^{-1} (f^\mu_s - K_s) \text{,} \label{eq:dsprocess}
\end{align} 
    the off-policy \textit{drifted FBSDEs} for the target policy $\mu$ and drift process $K_s$.
A solution to (\ref{eq:driftfsde})-(\ref{eq:driftbsde}) is the triple of $\F_s$-adapted processes $(X_s,Y_s,Z_s)$
    for which $X_s$ satisfies the FSDE \eqref{eq:driftfsde}
    and $Y_s,Z_s$ satisfy the BSDE 
    \eqref{eq:driftbsde}.
    
\begin{theorem} \label{thm:driftfbsde}
Let Assumptions~\ref{ass:optviscosity}-\ref{ass:onpolicy_strong} hold, and let $V^\mu$ be a classical solution
    of \eqref{eq:hjpde}.
Assume $K_s$ is chosen such that
    \eqref{eq:driftfsde} admits a unique square-integrable solution $X_s$
    (i.e., it satisfies the properties of \cite[Chapter~1, Theorem~6.16]{yong1999stochastic})
    and $\sigma^{-1}_s K_s$ satisfies Novikov's criterion.
Then, there exists a solution $(Y_s, Z_s)$ to the BSDE \eqref{eq:driftfsde},
    and it holds that
\begin{align}
    Y_s &= V^\mu(s,X_s) \text{,} & s \in [t,T] \text{,} \label{eq:yv} \\
    Z_s &= \sigma_s^\top \partial_x V^\mu(s,X_s) \text{,} & \text{a.e.} \;  s \in [t,T] \text{,} \label{eq:zv}
\end{align}
    $\P$-a.s., and, in particular,
\begin{align}
    Y_t &= \E_{\P}[\wh{Y}_{t,\tau}] = V^\mu(t,x), 
    & \P\text{-a.s.} \text{,}
    \label{eq:thmcontinfk2}
\end{align}
    where,
\begin{align}
    \wh{Y}_{t,\tau} &:= Y_\tau + \int_t^\tau (\ell^\mu_s + Z^{\top}_s D_s) \, \ds \text{.} 
    \label{eq:bsdediff2}
\end{align}
\hfill $\square$
\end{theorem}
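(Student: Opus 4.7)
The plan is to construct an explicit candidate solution to the BSDE directly from the classical solution $V^\mu$ and then verify it by Itô's formula. Concretely, set $Y_s := V^\mu(s, X_s)$ and $Z_s := \sigma_s^\top \partial_x V^\mu(s, X_s)$ along the path of the forward process $X_s$ given by \eqref{eq:driftfsde}. Because \eqref{eq:driftfsde} is assumed to admit a unique square-integrable solution and because $V^\mu$ is $C^{1,2}$ by Assumption~\ref{ass:onpolicy_strong}, both processes are well-defined and $\F_s$-adapted. The terminal condition $Y_T = g(X_T)$ is inherited from the terminal condition in \eqref{eq:hjpde}.

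Next I would apply Itô's formula to $V^\mu(s, X_s)$ using the $\P$-dynamics $\dX_s = K_s \, \ds + \sigma_s \, \dW^\P_s$, obtaining
\begin{align*}
    \dY_s &= \Big[\partial_t V^\mu + K_s^\top \partial_x V^\mu + \tfrac{1}{2}\tr(\sigma_s \sigma_s^\top \partial_{xx} V^\mu)\Big] \ds \\
    &\quad + (\partial_x V^\mu)^\top \sigma_s \, \dW^\P_s .
\end{align*}
Substituting the HJ PDE \eqref{eq:hjpde} for the $\partial_t V^\mu + \tfrac{1}{2}\tr$ terms replaces them with $-h^\mu(s, X_s, Z_s) = -\ell^\mu_s - Z_s^\top \sigma_s^{-1} f^\mu_s$. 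Using $Z_s^\top \sigma_s^{-1} = (\partial_x V^\mu)^\top$, the $K_s^\top \partial_x V^\mu$ drift contribution rewrites as $Z_s^\top \sigma_s^{-1} K_s$, so the bracketed drift collapses to $-\ell^\mu_s - Z_s^\top \sigma_s^{-1}(f^\mu_s - K_s) = -(\ell^\mu_s + Z_s^\top D_s)$, matching \eqref{eq:driftbsde} exactly. This establishes \eqref{eq:yv}--\eqref{eq:zv} pathwise, and existence of $(Y_s, Z_s)$ by explicit construction.

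Finally, for \eqref{eq:thmcontinfk2}, I would integrate \eqref{eq:driftbsde} from $t$ to $\tau$ to obtain
\begin{equation*}
    Y_t = Y_\tau + \int_t^\tau (\ell^\mu_s + Z_s^\top D_s)\, \ds - \int_t^\tau Z_s^\top \, \dW^\P_s,
\end{equation*}
and take $\E_\P[\cdot]$. Since $Y_t = V^\mu(t,x)$ is deterministic, it remains to show that the stochastic integral is a true $\P$-martingale on $[t,\tau]$ with zero mean; this is where the assumed square-integrability of $X_s$, the polynomial growth of $\partial_x V^\mu$ inherited through the regularity/viscosity theory referenced after Assumption~\ref{ass:onpolicy_easy}, and Lipschitz/boundedness of $\sigma$ combine to give $\E_\P\!\int_t^T \|Z_s\|^2 \ds < \infty$.

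The main obstacle is precisely this last integrability verification: in a clean classical-solution setting it is routine, but one must carefully invoke the growth bounds on $V^\mu$, on $\sigma$, and on $K_s$ (the latter implicitly controlled through Novikov's condition on $\sigma_s^{-1} K_s$, which also guarantees the forward SDE's drift does not destroy the moment bounds needed for the BSDE martingale terms). A secondary subtlety is that $Z_s$ is only defined almost everywhere in $s$, so the identification \eqref{eq:zv} must be stated in the a.e.\ sense, which is why the theorem statement carefully distinguishes between the two.
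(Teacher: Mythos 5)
Your argument is correct, but it takes a genuinely different route from the paper. You construct the candidate pair $Y_s = V^\mu(s,X_s)$, $Z_s = \sigma_s^\top \partial_x V^\mu(s,X_s)$ and verify by It\^o's formula under the original measure $\P$ that the drift terms collapse to $-(\ell^\mu_s + Z_s^\top D_s)$ once the PDE \eqref{eq:hjpde} is substituted --- a direct verification argument. The paper instead first invokes the standard well-posedness theory for BSDEs with Lipschitz drivers to get existence and uniqueness of $(Y_s,Z_s)$, then uses Girsanov's theorem (this is where Novikov's criterion enters) to pass to an equivalent measure $\Q$ under which \eqref{eq:driftfsde}--\eqref{eq:driftbsde} becomes the zero-drift FBSDE, identifies the solution with the known Feynman-Kac representation there by uniqueness, and transfers the almost-sure statements back to $\P$ by equivalence of measures. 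Your route is more elementary and self-contained, and notably does not need Novikov's condition at all for \eqref{eq:yv}--\eqref{eq:zv}; what it buys less of is (i) uniqueness --- as literally stated the theorem asserts that \emph{the} solution satisfies the representation, so you should still cite the Lipschitz-driver uniqueness result to conclude your constructed pair is the only square-integrable solution --- and (ii) extensibility: the paper's measure-change argument is reused verbatim for the viscosity-solution case in Proposition~\ref{cor:onpolicy_exists}, where It\^o's formula cannot be applied to a non-smooth $V^\mu$, so your approach is intrinsically tied to Assumption~\ref{ass:onpolicy_strong}. One small caution on your final integrability step: square-integrability of $X_s$ alone does not control $\E_\P\int_t^T\|Z_s\|^2\,\ds$ when $\partial_x V^\mu$ has polynomial growth of degree higher than one; you need the higher moment bounds on $X_s$ that come with the cited forward SDE well-posedness, which is exactly why the paper leans on the standard FBSDE existence theorems rather than checking this by hand.
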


\begin{proof}
Since \eqref{eq:driftfsde} has a square-integrable solution,  
    there exists a solution $(Y_s,Z_s)$ to \eqref{eq:driftbsde} which is unique and square-integrable
    because $\ell^\mu_s + z^\top D_s$ is Lipschitz in $y,z$
    \cite[Chapter~7, Theorem~3.2]{yong1999stochastic}.
Girsanov's theorem (see, e.g. \cite[Theorem~10]{Lowther2010}) indicates that if
    we construct the measure $\Q$ from $\P$ via the Radon-Nikodym derivative
\begin{align}
    \mathrm{d} \Q &= \Theta^{\Q|\P}_{t,T} \mathrm{d} \P \text{,}
\end{align}
    where,
\begin{align}
    \Theta^{\Q|\P}_{t,t'} &:= \exp \bigg ( -\frac{1}{2} \int_t^{t'} \| \sigma^{-1}_s K_s \|^2 \, \mathrm{d} s
    - \int_t^{t'} (\sigma^{-1}_s K_s)^\top \mathrm{d}W^\P_s  \bigg ) \text{,} \label{eq:thetadef2}
\end{align}
    then the process
\begin{align}
    W_{t'}^\Q &:= W_{t'}^\P + \int_t^{t'} \sigma^{-1}_s K_s \, \mathrm{d} s \text{,} \label{eq:wqwp}
\end{align}
    is Brownian in the newly constructed measure $\Q$.
A further consequence of Girsanov's theorem (see e.g., \cite[Chapter 5, Theorem 10.1]{fleming1976deterministic})
    is illustrated, through an abuse of notation, by
    substituting the relationship \mbox{$\mathrm{d} W^\P_s = \mathrm{d} W^\Q_s - \sigma^{-1}_s K_s \, \mathrm{d} s$}
    into the equations (\ref{eq:driftfsde}) and (\ref{eq:driftbsde}). 
Performing this substitution yields that the solution $(X_s, Y_s, Z_s)$ to 
    (\ref{eq:driftfsde}-\ref{eq:driftbsde}) with the $\P$-Brownian process $W^\P_s$ also solves the zero-drift FBSDE
\begin{align}
    \dX_s &=   \sigma_s \, \dW_s^{\Q}, \qquad X_t = x \text{,} \label{eq:fsdeorig2}  \\
    \dY_s &= -(\ell^\mu_s + Z^{\top}_s \sigma^{-1}_s f^\mu_s) \, \ds + Z^{\top}_s \, \dW_s^{\Q}, \quad Y_T &= g(X_T)  \text{,} 
    \label{eq:bsdeorig2}
\end{align}
    with $\Q$-Brownian $W^\Q_s$.

Under Assumptions~\ref{ass:optviscosity}-\ref{ass:onpolicy_easy},
    \cite[Chapter~7, Theorem~4.5]{yong1999stochastic}
    it follows that there exists a unique solution $(\wt{X}_s, \wt{Y}_s, \wt{Z}_s)$ to the FBSDE 
\begin{align}
    \mathrm{d} \wt{X}_s &= \sigma_s \, \dW_s^{\Q}, & \wt{X}_t &= x \text{,} \label{eq:fsde_alternate} \\
    \mathrm{d} \wt{Y}_s &= -h^\mu(s,\wt{X}_s,\wt{Z}_s) \, \ds + \wt{Z}^{\top}_s \, \dW_s^{\Q},
    & \wt{Y}_T &= g(\wt{X}_T)  \text{.} \label{eq:bsde_alternate}
\end{align}
    and \mbox{$\wt{Y}_s = V^\mu(s,\wt{X}_s)$} holds $\Q$-a.s., and, in particular, \mbox{$\wt{Y}_t = V^\mu(t,x)$}.
Under Assumption~\ref{ass:onpolicy_strong}, that $V^\mu$ is a classical solution, we also have
    that $\wt{Z}_s = \sigma^\top_s \partial_x V^\mu(s,\wt{X}_s)$ holds $\Q$-a.s.
    \cite[Chapter~7, (4.29)]{yong1999stochastic}.
It follows that 
\begin{align}
    h^\mu(s,\wt{X}_s,\wt{Z}_s) = \ell^\mu_s + \wt{Z}^{\top}_s \sigma^{-1}_s f^\mu_s \text{.}
    \label{eq:hmuequals}
\end{align}
Thus, by the uniqueness of solutions \cite[Chapter~1, Definition~6.6; Chapter~7, Definition~2.1]{yong1999stochastic},
    it follows that $(X_s, Y_s, Z_s)$ also solves 
    (\ref{eq:fsde_alternate})-(\ref{eq:bsde_alternate}) and thus \mbox{$Y_s = V^\mu(s,X_s)$},
    \mbox{$Z_s = \sigma^\top_s \partial_x V^\mu(s,X_s)$} hold $\Q$-a.s..

A further consequence of Girsanov's theorem is that $\P$ and $\Q$ are equivalent measures \cite{Lowther2010},
    that is, $\P(N)=0$ iff $\Q(N)=0$ for $N \in \F$.
Thus the previous statements said to hold $\Q$-a.s. also hold $\P$-a.s..

To show \eqref{eq:thmcontinfk2}, note that 
from \eqref{eq:bsdediff2} and the definition of the It\^o integral \cite[p.~33, (5.23)]{yong1999stochastic}, we have
\begin{align*}
    \wh{Y}_{t,\tau} &= Y_t 
        - \int_t^\tau Z^{\top}_s \dW_s^{\Q}  \text{.} 
\end{align*}
Taking the conditional expectation of both sides, and by noting that
by a basic property of the It\^o integral \cite[p.~34, (5.26)]{yong1999stochastic},
\begin{align*}
    \E_{\P}\bigg[\int_t^\tau Z^{\top}_s \dW_s^{\P} \bigg] &= 0 \text{,} 
\end{align*}
    we have $\E_{\P}[\wh{Y}_{t,\tau}] = \E_{\P}[Y_t] =  \E_{\P}[V^\mu(t,x)] = V^\mu(t,x)$.
\end{proof}

\begin{figure}
\centering
\subfloat[$K_s = X_s$ ($= f^\pi_s$ with $\pi \equiv 0$)]{\label{fig:nodriftexamp}\includegraphics[width=0.95\linewidth]{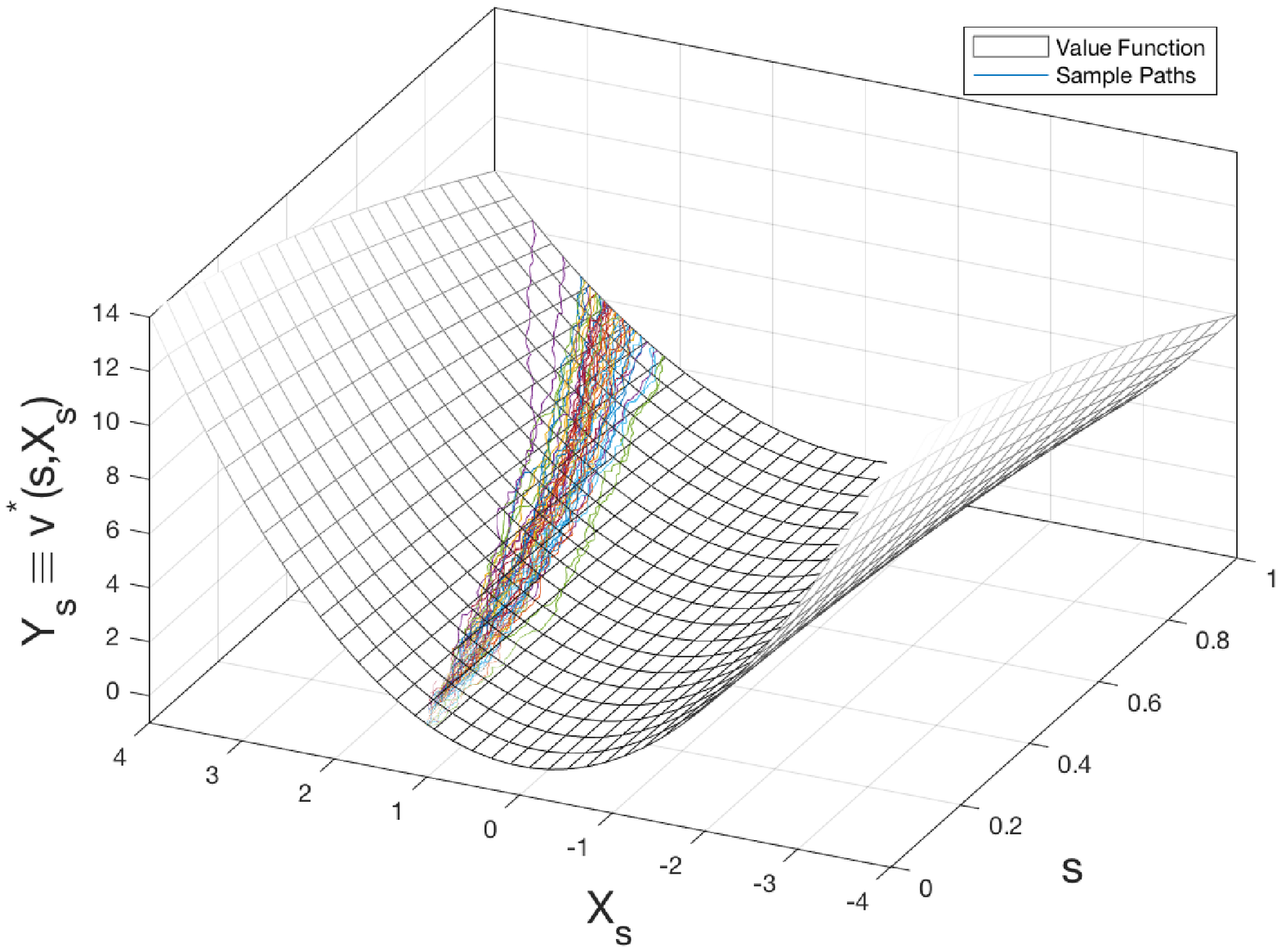}}
\\
\subfloat[$K_s = \sfrac{3}{5} X_s$ ($= f^\pi_s$ with $\pi = -\sfrac{2}{5} X_s$)]{\label{fig:arbitdriftexamp} \includegraphics[width=0.95\linewidth]{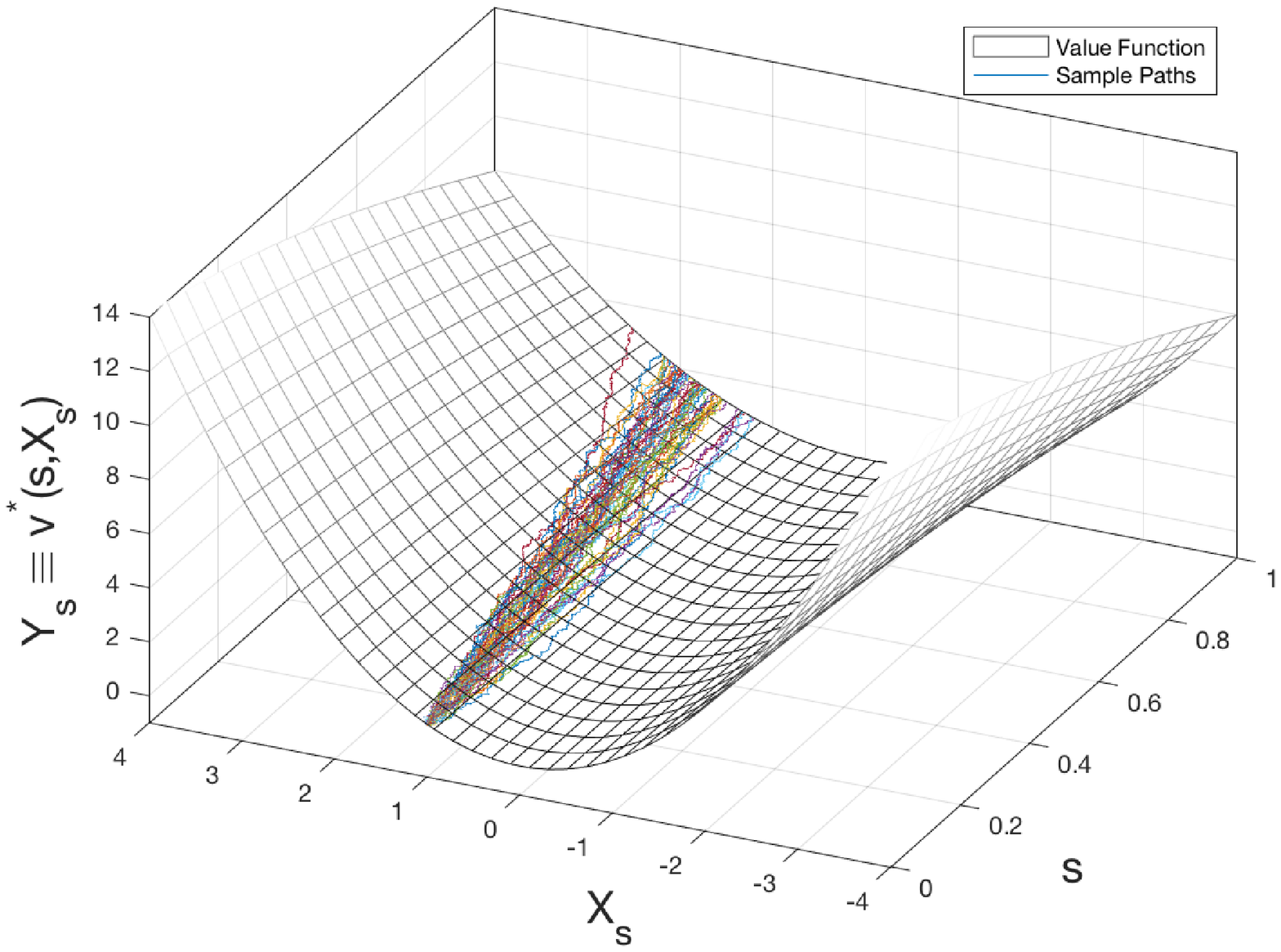}}
\\
\subfloat[$K_s = f^{\pi^*}_s$ (Optimal policy guided dynamics)]{\label{fig:optdriftexamp} \includegraphics[width=0.95\linewidth]{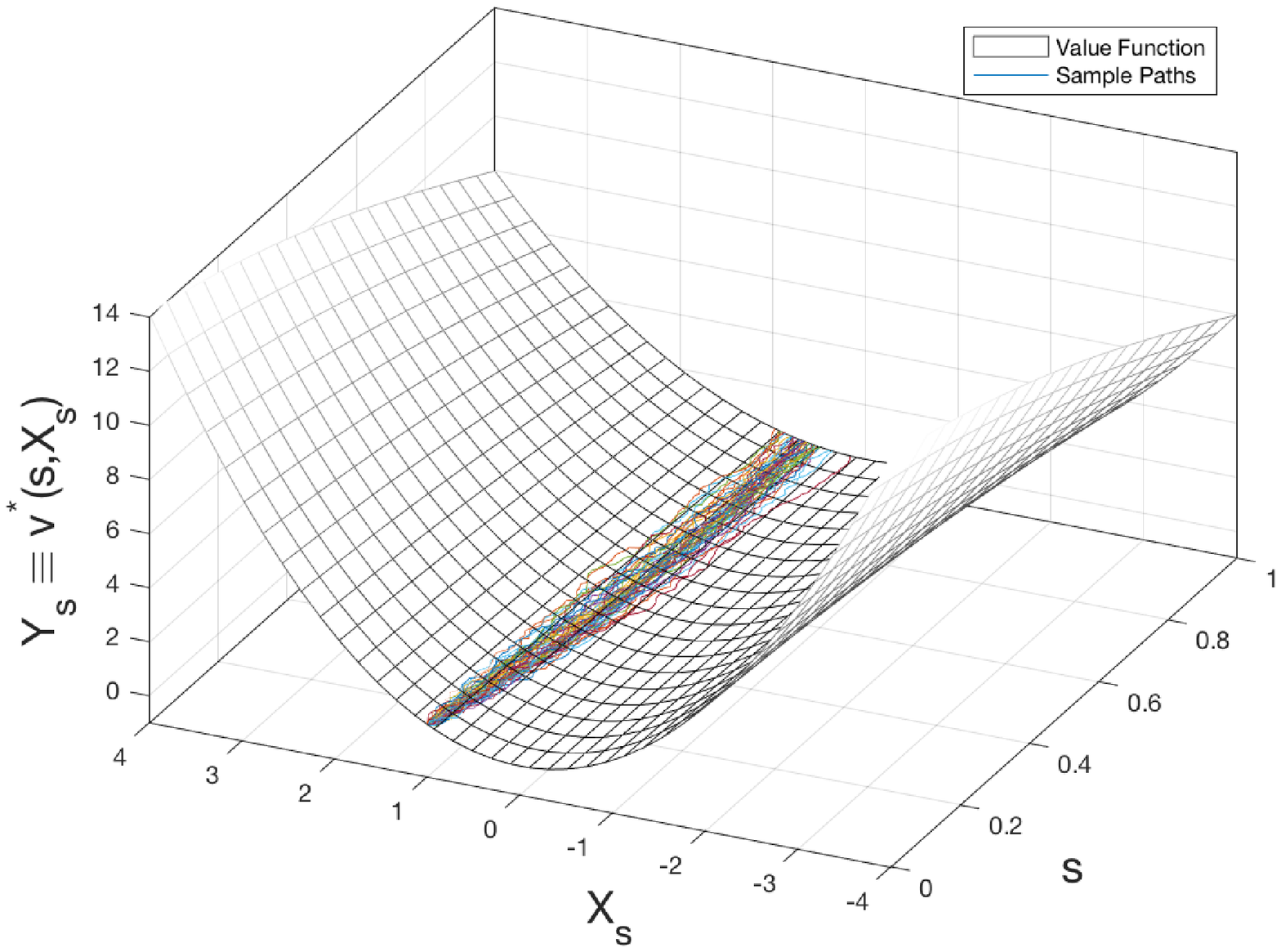}}
\caption{Surface of the optimal value function 
for Example~\ref{eg:toyexample}.
50~sample paths of drifted forward-backward SDEs (\ref{eq:driftfsde})-(\ref{eq:driftbsde}) are compared with different drifts. 
The initial condition is chosen as $x_0 = 1$.}
\label{fig:50SamplePaths}
\end{figure}

\begin{figure}
\centering
\subfloat[$K_s = f^{\pi^*}_s$, the same case as in Fig.~\ref{fig:50SamplePaths}~(c).]{\includegraphics[width=0.95\linewidth]{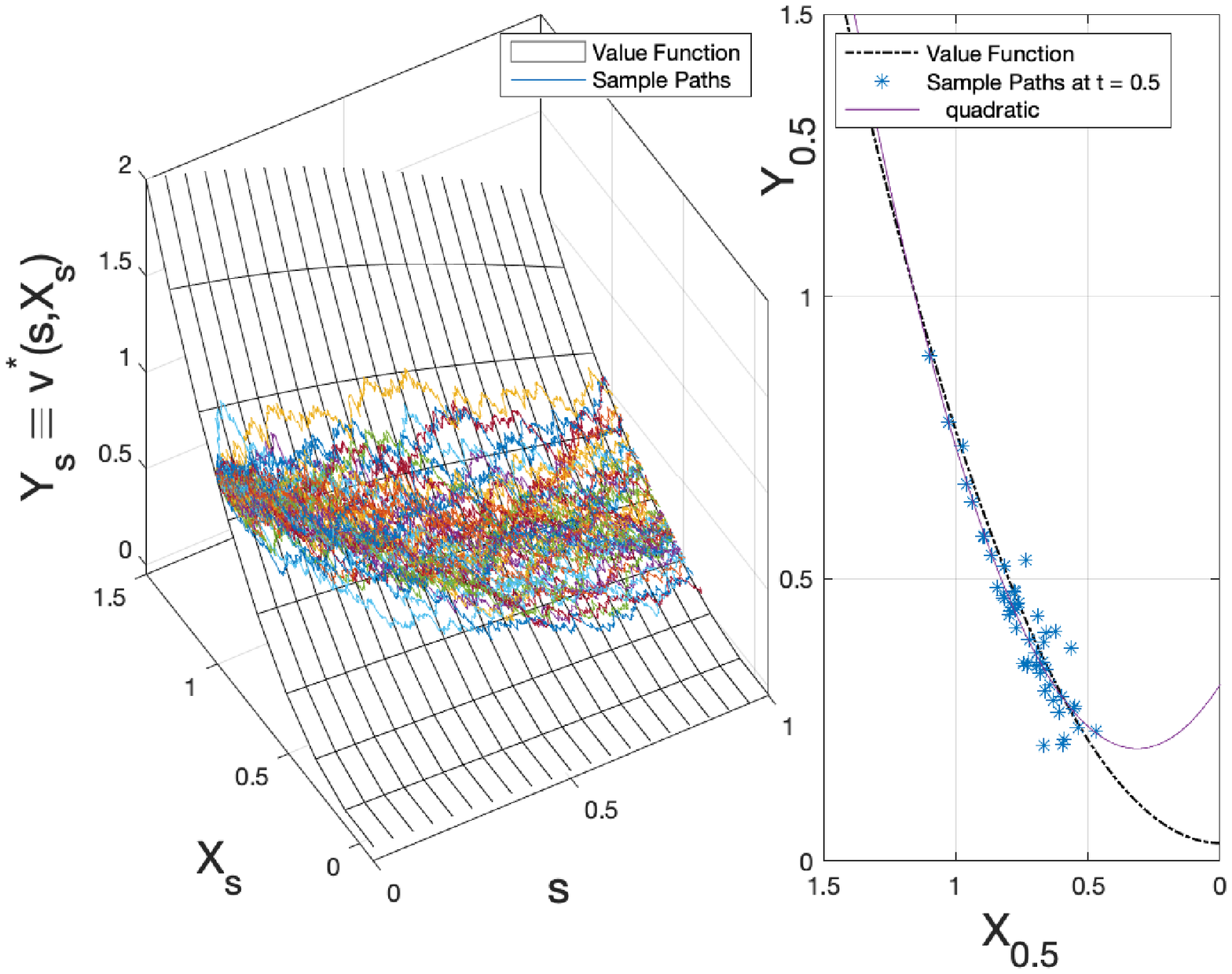}}
\\
\subfloat[][$K_{s}=\begin{cases}
f(X_{s},{\pi^{*}}(X_{s})), & \text{w.p. }0.8\text{,}\\
f(X_{s},-30), & \text{w.p. }0.1\text{,}\\
f(X_{s},30), & \text{w.p. }0.1\text{,}
\end{cases}$]{\includegraphics[width=0.95\linewidth]{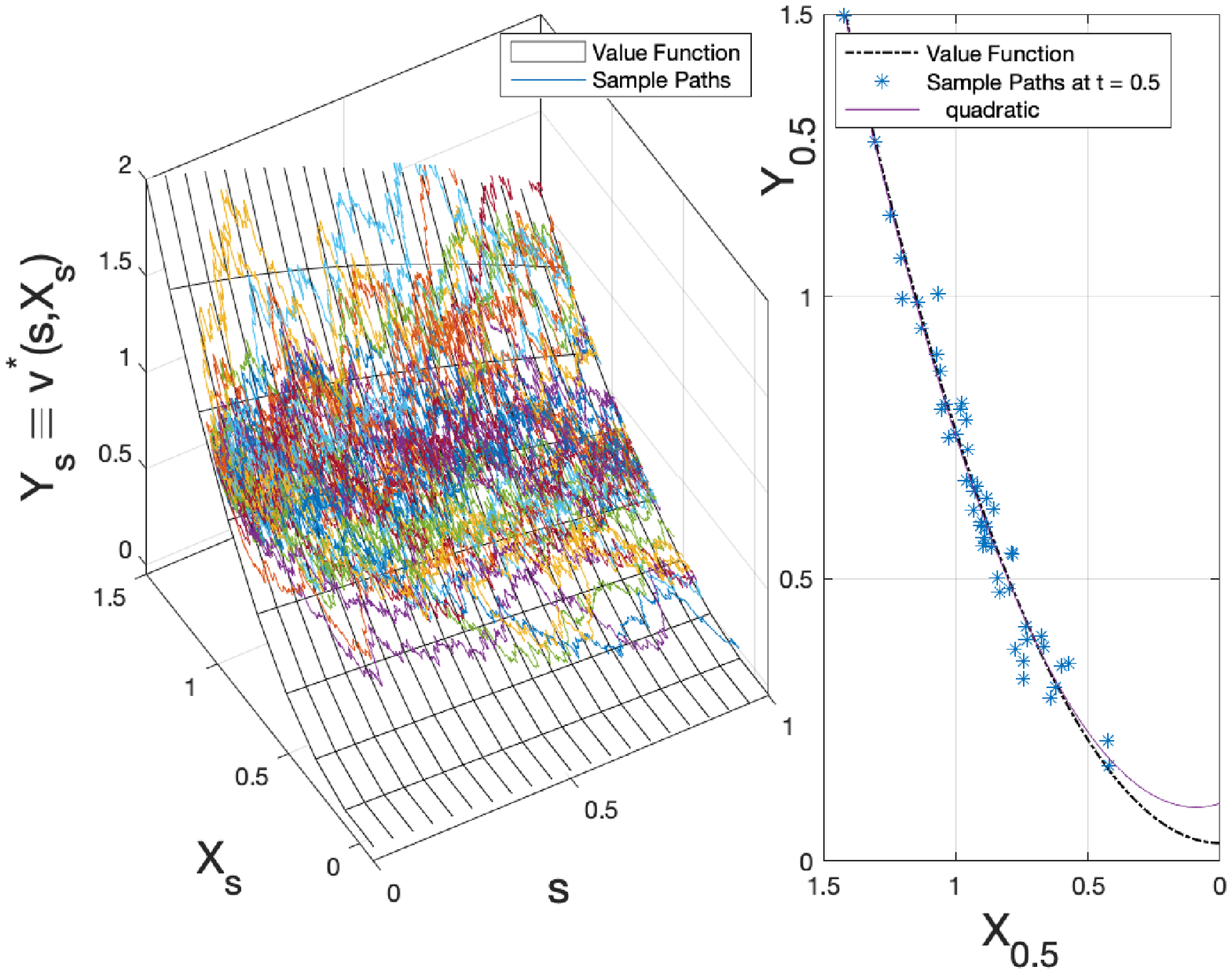}}
\caption{The implementation of the drift corresponding to (a) optimal input, and (b) optimal input with $10\%$ randomization, for the same system as 
in Example \ref{eg:toyexample}.}
\label{fig:Randomization}
\end{figure}

\begin{example}\label{eg:toyexample} 
Consider the optimal control of the scalar linear system 
\begin{align*}
\dX_s = (X_s + u_s) \, \ds + \sfrac{1}{5} \, \dW_s \text{,}
\end{align*}
    whose value function is defined as the solution of
\begin{align*}
    V^* (t,x) = \inf_{u[t,T]}  \E \left[ \mbox{$\frac{1}{2}$} (X_T)^2 
    + \int_t^T \mbox{$\frac{1}{2}$} u_s^2 \, \ds \right] \text{.}
\end{align*}
    It can be verified that the value function is analytically expressed as
\begin{align*}
    V^*(t,x) &= \alpha_t x^2 + \beta_t \text{,} 
    \end{align*}
 where
 \begin{align*}   
    \alpha(t) &:= (e^{-2(1-t)} + 1)^{-1} \text{,} \\
    \beta(t) &:= \mbox{$\frac{1}{50}$} \Big(\log\Big(\frac{1}{2}+ \mbox{$\frac{1}{2}$}   e^{2(1-t)}\Big)+2(1-t)\Big) ,
\end{align*}
with corresponding optimal policy \mbox{$\pi^*(t,x) = -2 \alpha_t x$}.
For this example, let the target policy be the optimal policy $\mu = \pi^*$, which can be computed analytically.
For any adapted process $K_s$, the associated drifted FBSDE is 
\begin{align*}
    \dX_s &= K_s \, \ds + \sfrac{1}{5} \, \dW_s^{\P},\\
    \dY_s &= -\left( \sfrac{1}{2} \mu^2_s + 5 (X_s + \mu_s) - 5 Z_s K_s \right) \, \ds +
    Z_s \, \dW^{\P}_s \text{,}
\end{align*}
    with boundary conditions
\begin{align*}
    X_0 &= x_0, \hspace{10pt} Y_T = \mbox{$\frac{1}{2}$} (X_T)^2  \nonumber \text{,}
\end{align*}
    where we choose $x_0 = 1$.
We also have $Z_t = ({2}/{5}) \alpha_t X_t$.
The correspondence between the value function and the solutions to the above FBSDE for
    three different choices of $K_s$, are illustrated in Fig. \ref{fig:50SamplePaths}.
The drift term in (c) generates a distribution for $X_s$ which matches the system
    guided by the optimal control policy.

Regardless of the drift value chosen, the process 
    $\begin{bmatrix} X_s & Y_s \end{bmatrix}^\top$ lies on the surface characterized
    by the function $V^*$.
Since, in general, function approximation has higher accuracy when interpolating in a region of dense samples compared to extrapolating in a region with no samples, the case in Fig.~\ref{fig:50SamplePaths}(c) is more desirable from an optimal control perspective, compared to Figs.~\ref{fig:50SamplePaths}(a) and \ref{fig:50SamplePaths}(b), since the samples have a correspondence with the optimal trajectories.

However, as illustrated in Fig.~\ref{fig:Randomization}, there are other selections for the sampling policy outperforming the optimal control policy, yielding better function approximation.
In this example, the function approximation at $t=0.5$, is illustrated on the right hand side of Fig.~\ref{fig:Randomization}. 
In particular, Fig.~\ref{fig:Randomization}(b) illustrates that such a randomized optimal policy explores a larger region compared to a pure implementation of the optimal drift in Fig.~\ref{fig:Randomization}(a), thus resulting in a more accurate approximation of the value function. 
In other words, a broader exploration of the state space contributes to better function approximations in the presence of numerical error that builds from recursive function approximation during the backward pass. 
\hfill $\square$
\end{example}

Example~\ref{eg:toyexample} and Fig.~\ref{fig:50SamplePaths} illustrate the link between
    the drifted FBSDE and the value function.
We can interpret this result in the following sense.
We can pick an arbitrary process $K_s$ to be the drift term, which generates a distribution for the forward process $X_s$ in the corresponding measure $\P$. 
The BSDE yields an expression for $Y_t$ using the same process $W^{\P}_s$ used in the FSDE. 
The term $Z^{\top}_s D_s$ acts as a correction in the BSDE to compensate for changing the drift of the FSDE. 
We can then use the relationship \eqref{eq:thmcontinfk2} to solve for the value function $V^\mu$, whose conditional expectation can be evaluated in $\P$. 

It should be noted that $K_s$ need not be a deterministic function of the random variable $X_s$, as is the case with $f^\mu_s$. 
For instance, it can be selected as  \mbox{$K_s(\omega) = \kappa(s,X_s(\omega),\omega)$} for some appropriate function $\kappa$, producing a non-trivial joint distribution for the random variables $(X_t,K_t)$.

A remarkable feature of the off-policy FBSDE formulation is that the forward 
    pass is decoupled from the backward pass, that is, the evolution of the forward SDE 
    does not explicitly depend on $Y_s$ or $Z_s$
    (whereas in the Stochastic Maximum Principle formulations 
    (see, e.g., \cite[Chapter 3]{yong1999stochastic}) the decoupling is irremovable). 
This feature forms the basis of FBSDE numerical investigations of stochastic optimal control  
    \cite{Bender2010, exarchos2018stochastic}.
    The significant difference of 
    Theorem~\ref{thm:driftfbsde} in comparison to those results 
    is that the focus is shifted here 
    from the solution of \eqref{eq:hjbpde} towards the broader class of functions satisfying 
    the \eqref{eq:hjpde}.
This provides a stronger case for policy iteration methodologies, because the theory does not
    require, or expect, $\mu$ to be an optimal policy, as is in 
    \cite{Bender2010, exarchos2018stochastic}.
Although not evaluated in this work, $\mu$ can be chosen according to design specifications
    other than estimating the optimal policy, such as to ensure the current policy
    is nearby the previously estimated policy.

\subsection{Interpretation of On-Policy Value Functions} \label{sec:interpretation}

When $V^\mu$ is a non-classical viscosity solution, $\partial_x V^\mu$ may not exist everywhere
    and the expression \eqref{eq:zv} might not hold everywhere
    (see, e.g., the discussion in \cite[p.46-47]{el1997backward}).
Nonetheless, we can modify Theorem~\ref{thm:driftfbsde} to yield a similar result.

\begin{proposition} \label{cor:onpolicy_exists}
Let Assumptions~\ref{ass:optviscosity}-\ref{ass:onpolicy_easy} hold, and let $V^\mu$ be the viscosity
    solution of \eqref{eq:hjpde}.
Then, there exists a function \mbox{$G : [0,T] \times \R^n \rightarrow \R^n$} such that
    $\wt{Z}_s = \sigma^\top_s G(s,\wt{X}_s)$ $\Q$-a.s., where
    $(\wt{X}_s, \wt{Y}_s, \wt{Z}_s)$ is the solution to the FBSDE (\ref{eq:fsde_alternate})-(\ref{eq:bsde_alternate}).
Let $\mu(t,x) := \mu(t,x,G(t,x))$ and define
    $f^\mu$, $\ell^\mu$ accordingly, and  replace \eqref{eq:zv} with
\begin{align}
    Z_s = \sigma_s^\top G(s,X_s) \text{,} \quad \text{a.e.} \;  s \in [0,T] \text{.} \label{eq:zv_new}
\end{align}
    Then, the conclusions of Theorem~\ref{thm:driftfbsde} hold subject to these modifications.
\end{proposition}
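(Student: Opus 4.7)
The plan is to mimic the proof of Theorem~\ref{thm:driftfbsde}, but replace the single step that used Assumption~\ref{ass:onpolicy_strong} — namely the identification $\wt{Z}_s = \sigma_s^\top \partial_x V^\mu(s,\wt{X}_s)$ via \cite[Chapter~7, (4.29)]{yong1999stochastic} — by a Markovian representation of the $Z$-process that does not require classical differentiability of $V^\mu$. Once such a representation $\wt{Z}_s = \sigma_s^\top G(s,\wt{X}_s)$ is in hand, the definitions $\mu(t,x):=\mu(t,x,G(t,x))$, $f^\mu(t,x):=f(t,x,\mu(t,x))$, $\ell^\mu(t,x):=\ell(t,x,\mu(t,x))$ restore the pointwise identity
\[
h^\mu(s,\wt{X}_s,\wt{Z}_s) \;=\; \ell^\mu_s + \wt{Z}_s^\top \sigma_s^{-1} f^\mu_s
\]
used in \eqref{eq:hmuequals}, and the remainder of the proof of Theorem~\ref{thm:driftfbsde} can be reused verbatim.

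The first step is to invoke \cite[Chapter~7, Theorem~4.5]{yong1999stochastic} under Assumptions~\ref{ass:optviscosity}--\ref{ass:onpolicy_easy} to get existence and uniqueness of an adapted, square-integrable solution $(\wt{X}_s,\wt{Y}_s,\wt{Z}_s)$ of the zero-drift FBSDE (\ref{eq:fsde_alternate})--(\ref{eq:bsde_alternate}) and the nonlinear Feynman--Kac identity $\wt{Y}_s = V^\mu(s,\wt{X}_s)$ $\Q$-a.s. The second step — the main obstacle — is to produce the deterministic function $G$. Here I would use the Markovian structure of the decoupled FBSDE: since $\wt{X}_s$ is a time-homogeneous-in-coefficients diffusion and the BSDE driver depends only on $(s,\wt{X}_s,\wt{Y}_s,\wt{Z}_s)$ with $\wt{Y}_T$ a function of $\wt{X}_T$, standard results on Markovian BSDEs (for instance \cite[Chapter~7, Proposition~4.3]{yong1999stochastic} or the representation theorems of El Karoui--Peng--Quenez) yield a Borel function $u(t,x)=V^\mu(t,x)$ and a Borel function $v(t,x)$ such that $\wt{Y}_s = u(s,\wt{X}_s)$ and $\wt{Z}_s = v(s,\wt{X}_s)$ for a.e.\ $s$, $\Q$-a.s. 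Setting $G(s,x) := (\sigma^\top(s,x))^{-1} v(s,x)$, which is well defined by the boundedness of $\sigma^{-1}$ in Assumption~\ref{ass:optviscosity}, gives the desired representation $\wt{Z}_s = \sigma_s^\top G(s,\wt{X}_s)$. Informally, $G$ plays the role of $\partial_x V^\mu$ in the viscosity setting; when $V^\mu$ happens to be classical, $G$ coincides with $\partial_x V^\mu$ almost everywhere.

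With $G$, $\mu$, $f^\mu$, $\ell^\mu$ redefined as in the statement, $h^\mu(s,\wt{X}_s,\wt{Z}_s) = \ell^\mu_s + \wt{Z}_s^\top \sigma_s^{-1} f^\mu_s$ holds by direct substitution into \eqref{eq:hHJ}, so the driver of the zero-drift BSDE \eqref{eq:bsde_alternate} agrees with that of \eqref{eq:bsdeorig2} along the solution. I can then carry over, without change, the Girsanov argument of Theorem~\ref{thm:driftfbsde}: define $\Q$ via the Radon--Nikodym derivative \eqref{eq:thetadef2}, observe that $W^\Q$ in \eqref{eq:wqwp} is $\Q$-Brownian under the Novikov condition on $\sigma^{-1}_s K_s$, and substitute $\dd W^\P_s = \dd W^\Q_s - \sigma_s^{-1} K_s \, \ds$ into (\ref{eq:driftfsde})--(\ref{eq:driftbsde}) to recover (\ref{eq:fsdeorig2})--(\ref{eq:bsdeorig2}). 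Uniqueness of the solution to the zero-drift FBSDE then forces $(X_s,Y_s,Z_s) = (\wt{X}_s,\wt{Y}_s,\wt{Z}_s)$ in law, whence $Y_s = V^\mu(s,X_s)$ and $Z_s = \sigma_s^\top G(s,X_s)$ hold $\Q$-a.s.; equivalence of $\P$ and $\Q$ upgrades these to $\P$-a.s. identities. Finally, \eqref{eq:thmcontinfk2} is obtained exactly as in Theorem~\ref{thm:driftfbsde} by taking $\P$-expectations in \eqref{eq:bsdediff2} and using the martingale property of the It\^o integral of $Z_s$ against $W^\Q_s$ (rewritten in $\P$ via Girsanov), since $Z$ is square-integrable.

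The technical care needed is principally in the second step: ensuring that the Markovian representation result we cite applies to the driver at hand (which is merely Lipschitz in $z$ and of polynomial growth in $x$) and that $G$ is well defined up to a $\dd s \otimes \dd \P$-null set, which is why the statement reads ``a.e.\ $s\in[0,T]$'' in \eqref{eq:zv_new}. Everything else in the proof of Theorem~\ref{thm:driftfbsde} — the choice of $\Q$, the equivalence of measures, and the conditional-expectation calculation — is measure-theoretic and insensitive to whether the representation of $Z$ comes from a classical gradient or from $G$.
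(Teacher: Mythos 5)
Your proposal is correct and follows essentially the same route as the paper: the paper also obtains $G$ from the Markovian representation theorem of El Karoui--Peng--Quenez (\cite[Theorem~4.1]{el1997backward}), then verifies that \eqref{eq:hmuequals} still holds under the redefined $\mu, f^\mu, \ell^\mu$, and reuses the Girsanov argument of Theorem~\ref{thm:driftfbsde} unchanged. Your extra care in defining $G(s,x) = (\sigma^\top(s,x))^{-1} v(s,x)$ via the boundedness of $\sigma^{-1}$ is a detail the paper leaves implicit, but it is consistent with its proof.
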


\begin{proof}
    The existence of $G$ with the relevant property follows from \cite[Theorem~4.1]{el1997backward}.
    It is easy to verify that \eqref{eq:hmuequals} is satisfied under the proposed modifications.
    The rest of the proof follows similarly to the proof of Theorem~\ref{thm:driftfbsde}.
\end{proof}

We now use this proposition to show the following result. 

\begin{corollary} \label{eq:onpolicy_interpretation}
Let Assumptions~\ref{ass:optviscosity}-\ref{ass:onpolicy_easy} hold, let $V^\mu$ be the viscosity
    solution of \eqref{eq:hjpde}, and let $G, \mu, f^\mu, \ell^\mu$ be as in Proposition~\ref{cor:onpolicy_exists}, where
    $G = \partial_x V^\mu$. 
    If also Assumption~\ref{ass:onpolicy_strong} holds, 
then the on-policy value function has the interpretation
\begin{align}
    V^\mu(t,x) &= \E \bigg [ \int^T_t \ell^\mu(s,X_s) \, \ds + g(X_T) \bigg ] \text{,}
\end{align}
    where,
\begin{align}
    \dX_s &= f^\mu(s,X_s) \, \ds + \sigma(s,X_s) \, \dW_s, \quad X_t = x \text{.}
\end{align}

\end{corollary}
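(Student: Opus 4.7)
The plan is to specialize Proposition~\ref{cor:onpolicy_exists} (equivalently, Theorem~\ref{thm:driftfbsde}, since Assumption~\ref{ass:onpolicy_strong} allows us to take $G = \partial_x V^\mu$) to the \emph{on-policy} drift $K_s := f^\mu(s,X_s)$. Under this choice, the drift-compensation process defined in \eqref{eq:dsprocess} vanishes identically, $D_s = \sigma_s^{-1}(f^\mu_s - K_s) \equiv 0$; the forward equation \eqref{eq:driftfsde} then reduces to the on-policy SDE appearing in the statement of the corollary, and the backward equation \eqref{eq:driftbsde} collapses to $\dY_s = -\ell^\mu_s\,\ds + Z_s^\top\,\dW_s^\P$ with $Y_T = g(X_T)$. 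Consequently, definition \eqref{eq:bsdediff2} with $\tau = T$ simplifies to $\wh Y_{t,T} = g(X_T) + \int_t^T \ell^\mu(s,X_s)\,\ds$, and \eqref{eq:thmcontinfk2} yields
\[
V^\mu(t,x) \;=\; Y_t \;=\; \E_\P[\wh Y_{t,T}] \;=\; \E_\P\Big[g(X_T) + \int_t^T \ell^\mu(s,X_s)\,\ds\Big],
\]
which is precisely the asserted identity.

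The only substantive task is to verify that the on-policy drift $K_s = f^\mu_s$ meets the hypotheses of Proposition~\ref{cor:onpolicy_exists}: well-posedness of \eqref{eq:driftfsde} and Novikov's criterion for $\sigma_s^{-1} f^\mu_s$. Under Assumption~\ref{ass:onpolicy_strong}, $\partial_x V^\mu$ is continuous, so $\mu(t,x) = \mu(t,x,\sigma^\top(t,x)\partial_x V^\mu(t,x))$ is measurable with enough regularity for the FSDE to admit a (at least weak) square-integrable solution, obtained by the same Girsanov construction used in the proof of Theorem~\ref{thm:driftfbsde} --- starting from the driftless $\Q$-SDE, which is well-posed by Assumption~\ref{ass:optviscosity}, and transferring back to $\P$. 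Compactness of the control set $U$, Lipschitzness of $f$ in $x$, and boundedness of $\sigma^{-1}$ give $\|\sigma_s^{-1} f^\mu_s\|$ a uniform linear bound in $x$, which combined with standard moment estimates for $X_s$ under sublinear growth of $\sigma$ supplies the input needed for Novikov's criterion.

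The main anticipated obstacle is the Novikov condition itself: although $U$ is compact, showing $\E_\P[\exp(\tfrac{1}{2}\int_t^T \|\sigma^{-1}_s f^\mu_s\|^2\,\ds)] < \infty$ still requires controlling exponential moments of $\|X_s\|^2$ that can blow up in an unfavorable linear-growth regime. The standard remedy is a localization argument --- truncate $\mu$ on balls of radius $R$, apply the resulting bounded-drift version of Proposition~\ref{cor:onpolicy_exists} on each truncation, and then pass $R \to \infty$ by dominated convergence on the right-hand side, using moment bounds for the on-policy $X_s$ derived from Assumption~\ref{ass:optviscosity}. Once this integrability is secured, everything else follows mechanically from the $K_s \equiv f^\mu_s$ specialization described above.
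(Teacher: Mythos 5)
Your proposal is correct and follows exactly the paper's route: the paper's entire proof is the one-line observation that setting $K_s = f^\mu_s$ in Proposition~\ref{cor:onpolicy_exists} makes $D_s \equiv 0$ and reduces \eqref{eq:thmcontinfk2}--\eqref{eq:bsdediff2} to the stated identity. Your additional care in verifying well-posedness and Novikov's criterion for the on-policy drift (via compactness of $U$, boundedness of $\sigma^{-1}$, and localization) goes beyond what the paper writes down, but it is supplementary diligence rather than a different argument.
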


\begin{proof}
    Letting $K_s = f^\mu_s$, Proposition~\ref{cor:onpolicy_exists} yields the result in
    equations (\ref{eq:thmcontinfk2})-(\ref{eq:bsdediff2}).
\end{proof}

Note that while $G = \partial_x V^\mu$ cannot be guaranteed for all viscosity solutions of \eqref{eq:hjpde},
    it does hold for smooth convergent approximations of $V^\mu$
    (see, for example, proof of Theorem~4.5 in~\cite[Chapter~7]{yong1999stochastic}).

\subsection{Local Entropy Weighing}

As discussed in Section~\ref{sec:OffPolicy}, the disentanglement of the forward sampling from the backward function approximation provides the opportunity to employ broad sampling schemes to cover the state space with potential paths. 
However, fitting a value function broadly to a wide support distribution might degrade the quality of the function approximation since high accuracy of function approximation is more crucial in those parts of the state space that are in proximity to the optimal trajectories. 
Once forward sampling has been performed and some parts of the value function have been approximated, we can apply a heuristic in which sample paths closer to optimal trajectories are weighted more so as to concentrate value function approximation accuracy in those regions.

To this end, we propose using a bounded heuristic random variable $\rho_t$ 
    to produce a new measure $\pR_t$, the weighted counterpart to $\P_t$,
    defined as 
\begin{align}
    \mathrm{d}\Q_t = \Theta_{0,t}^{\Q|\P} \mathrm{d}\P_t \text{.}
\end{align}
In order to avoid underdetermination of the regression by concentrating a single or few samples, 
    we select $\pR_t$ as 
\begin{align}
    \pR_t \in \argmin_{\pR_t} \big \{ \E_{\pR_t}[\rho_t] 
    + \lambda \mathcal{H}(\pR_t \| \P_t) \big \} \text{,} \label{eq:MinRelativeEntropyMeasure}
\end{align}
with $\lambda > 0$, a tuning variable, 
and 
\begin{align}
    \mathcal{H}(\pR_t \| \P_t) = \E_{\pR_t}  \bigg[ \log \bigg ( 
    \frac{\mathrm{d} \pR_t}{\mathrm{d} \P_t} \bigg )  \bigg] \text{,}
\end{align}
 is   the relative entropy of $\pR_t$ which takes its minimum value when $\pR_t = \P_t$, 
    the distribution in which all sampled paths have equal weight. 


The minimizer of \eqref{eq:MinRelativeEntropyMeasure}, which balances between the value of $\rho_t$ 
    and the relative entropy of its induced measure, 
    has a solution $\pR^*_t$, given by \cite[p.~2]{theodorou2012relative}
\begin{gather}
    \mathrm{d} \pR^*_t 
    = \Theta^{\pR|\P}_t 
    \mathrm{d} \P_t  \text{,} \qquad
    \Theta^{\pR|\P}_t := 
    \frac{ \exp(-\sfrac{1}{\lambda} \rho_t) }{ \E_{\P_t}[\exp(-\sfrac{1}{\lambda} \rho_t)] } 
    \text{.} \label{eq:changemeas} 
\end{gather}

Henceforth, for simplicity, we let $\pR_t$ refer to this minimizer $\pR^*_t$. 
During numerical approximation we can interpret the weights as a \textit{softmin} operation over paths according to this heuristic, a method often used in the deep learning literature~\cite{Goodfellow-et-al-2016}.

\begin{theorem}\label{thm:weight}
Assume $\rho_\tau$ is selected such that $W^\P_s$ is Brownian on the interval $[t,\tau]$
    with respect to the induced measure $\pR_\tau$.
It then holds that
\begin{align}
    Y_t &= \E_{\pR_\tau}[\wh{Y}_{t,\tau}| X_t] = V^\mu(t,X_t), 
    & \pR_\tau\text{-a.s.} \text{,}
    \label{eq:thmcontinfk3}
\end{align}
    where $\wh{Y}_{t,\tau}$ is defined in \eqref{eq:bsdediff2}.
Furthermore, the minimizer $\phi^*$ of the optimization problem
\begin{align}
    &\inf_{\phi \in L^2} 
    \E_{\pR_\tau}[(\wh{Y}_{t,\tau} - \phi(X_t))^2] \nonumber \\
    &\quad = \inf_{\phi \in L^2} 
    \E_{\P_\tau}[\Theta^{\pR|\P}_\tau (\wh{Y}_{t,\tau} - \phi(X_t))^2] \text{,}
    \label{eq:thmcontinfk4}
\end{align}
    over $X_t$-measurable square integrable variables $\phi(X_t)$ coincides with
    the value function $\phi^*(X_t) = V^\mu(t,X_t)$.
\hfill $\square$
\end{theorem}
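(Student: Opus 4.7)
The plan is to reduce the assertions to Theorem~\ref{thm:driftfbsde} combined with a change-of-measure and a classical $L^2$ projection argument. The key observation is that the hypothesis places $W_s^\P$ as a $\pR_\tau$-Brownian motion on $[t,\tau]$, so the stochastic integral appearing on the right-hand side of the BSDE remains a $\pR_\tau$-martingale; this is precisely what was needed in the proof of Theorem~\ref{thm:driftfbsde} and allows the same argument to go through under $\pR_\tau$.

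For the first equality in \eqref{eq:thmcontinfk3}, I would start from the integrated form of \eqref{eq:driftbsde},
\begin{equation*}
\wh{Y}_{t,\tau} \;=\; Y_t \;-\; \int_t^\tau Z_s^\top \, \dW_s^{\P},
\end{equation*}
which is just \eqref{eq:bsdediff2} rewritten. Since $W_s^\P$ is $\pR_\tau$-Brownian on $[t,\tau]$ and $Z_s$ inherits the square-integrability needed for the stochastic integral to be a genuine martingale (not merely a local one) under $\pR_\tau$ because $\Theta^{\pR|\P}_\tau$ is bounded by the boundedness of $\rho_\tau$, taking $\E_{\pR_\tau}[\,\cdot\mid \F_t]$ kills the integral and yields $\E_{\pR_\tau}[\wh{Y}_{t,\tau}\mid\F_t]=Y_t$. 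Conditioning on the coarser $\sigma$-algebra generated by $X_t$ then gives $\E_{\pR_\tau}[\wh{Y}_{t,\tau}\mid X_t]=\E_{\pR_\tau}[Y_t\mid X_t]=Y_t$ since $Y_t=V^\mu(t,X_t)$ is $\sigma(X_t)$-measurable. The identity $Y_t=V^\mu(t,X_t)$ itself is imported directly from Theorem~\ref{thm:driftfbsde}, which holds $\P$-a.s.\ and hence $\pR_\tau$-a.s.\ because $\pR_\tau$ and $\P_\tau$ are mutually absolutely continuous through the bounded density \eqref{eq:changemeas}.

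For the minimizer statement \eqref{eq:thmcontinfk4}, the equality of the two expectations is immediate from the Radon–Nikodym relation $\dd\pR_\tau = \Theta^{\pR|\P}_\tau \dd\P_\tau$. Then I would invoke the standard fact that among $\sigma(X_t)$-measurable $L^2$ random variables, the minimizer of $\E_{\pR_\tau}[(\wh{Y}_{t,\tau}-\phi(X_t))^2]$ is the conditional expectation $\phi^*(X_t)=\E_{\pR_\tau}[\wh{Y}_{t,\tau}\mid X_t]$. Part 1 then identifies this with $V^\mu(t,X_t)$, finishing the proof.

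The principal technical obstacle is verifying that the stochastic integral $\int_t^\tau Z_s^\top \dW_s^\P$ really is a $\pR_\tau$-martingale (not just a local martingale) so that its $\pR_\tau$-conditional expectation vanishes. Because $Z_s = \sigma_s^\top \partial_x V^\mu(s,X_s)$ from Theorem~\ref{thm:driftfbsde} is square-integrable under $\P$ and $\Theta^{\pR|\P}_\tau$ is bounded (as $\rho_\tau$ is assumed bounded), square-integrability is preserved under $\pR_\tau$ and standard Itô-isometry arguments apply. A second, mostly bookkeeping, subtlety is making sure the first equality in \eqref{eq:thmcontinfk4} is applied to the unconditional rather than conditional expectation; this is automatic from the definition of $\dd\pR_\tau$. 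Everything else is a direct transcription of the proof of Theorem~\ref{thm:driftfbsde} into the weighted measure.
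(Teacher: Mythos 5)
Your proposal is correct and follows essentially the same route as the paper's proof: write $\wh{Y}_{t,\tau} = Y_t - \int_t^\tau Z_s^\top \dW_s^\P$, use equivalence of $\pR_\tau$ and $\P_\tau$ (via the bounded, strictly positive density $\Theta^{\pR|\P}_\tau$) to carry over the conclusions of Theorem~\ref{thm:driftfbsde}, kill the stochastic integral under the $\pR_\tau$-conditional expectation since $W^\P_s$ is $\pR_\tau$-Brownian, and conclude \eqref{eq:thmcontinfk4} from the $L^2$-projection property of conditional expectation plus the Radon--Nikodym change of measure. Your extra attention to the integral being a true (not merely local) $\pR_\tau$-martingale is a detail the paper leaves implicit, but it does not change the argument.
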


\begin{proof}
First, note that $X_s$, $Y_s$, $Z_s$, $W_s^{\P}$, and $\wh{Y}_{t,\tau}$ are 
    $\F_\tau$-measurable for $s, t \in [0,\tau]$.
Thus, \eqref{eq:yv} and 
\begin{align}
    \wh{Y}_{t,\tau} &= Y_t 
        - \int_t^\tau Z^{\top}_s \dW_s^{\P}  , \label{eq:yhtzdw}
\end{align}
    hold $\P_\tau$-a.s..
We now show that they hold $\pR_\tau$-a.s. as well.
Since $\E_{\P_\tau}[\Theta^{\pR|\P}_\tau] = 1$ and $\P_\tau$ is a probability measure,
    then $\pR_\tau$ is also a probability measure.
Furthermore, since $\rho_\tau$ is bounded, $\Theta_\tau > 0$ $\P_\tau$-a.s..
It follows that $\pR_\tau$ and $\P_\tau$ are equivalent measures because they reciprocally
    have strictly positive densities \cite[Chapter~10, Remark~10.4]{pascucci2011pde}. 
The proof of Theorem~\ref{thm:driftfbsde} shows that since these measures are equivalent,
    \eqref{eq:yv} and \eqref{eq:yhtzdw} hold $\P_\tau$-a.s. if and only if they also hold
    $\pR_\tau$-a.s..
Since $W^\P_s$ is Brownian in $\pR_\tau$ over the integral, the second term in the
    right hand side of \eqref{eq:yhtzdw} will drop out when taking the conditional
    expectation $\E_{\pR_\tau}[\wh{Y}_{t,\tau}|X_t]$, yielding \eqref{eq:thmcontinfk3}.

Equation \eqref{eq:thmcontinfk4} is a result of the $L^2$-projective properties
    of conditional expectation \cite[Chapter~10.3, Property~11]{resnick2003probability}
    applied to \eqref{eq:thmcontinfk3}, followed by
    a change of measure applied to the expected value integration with \eqref{eq:changemeas}.
\end{proof}

In the following section, we approximate the minimization of the right hand side of 
    \eqref{eq:thmcontinfk4} over parameterized value function models 
    to obtain an estimate of the value function. 


To summarize, in this section we introduced three measures: (a) $\Q$, the measure associated with the target policy $\mu$ for the value function $V^\mu$, (b) $\P$, the sampling measure used in the forward pass to explore the state space, and (c) $\pR_\tau$, the local-entropy weighted measure used in the backward pass to control function approximation accuracy.

In the this section we have provided  the theoretical results justifying our approach 
    for the case of continuous-time stochastic processes.
In the next section we discuss how these each of these measures are represented numerically.

\section{Branching Path LSMC} \label{sec:branchpathlsmc}



In this section we propose
    a novel discrete-time, finite-dimensional numerical scheme
    to produce the FSDE distribution, along with a procedure
    to solve for the value function in a backward pass using the BSDE.
The FSDE distribution is represented as a branching-path tree and the BSDE is used to produce estimators,
    stepping backwards along each of the branching paths, to estimate the value function parameters
    using LSMC regression.
Later on, in Section~\ref{sec:fbrrt}, we propose particular choices for the drift process and the
    heuristic weight function used in the proposed
    FBRRT-SDE numerical method.

Henceforth, we assume a discrete-time partition of the interval $[0,T]$,
    $(t_0 = 0, \ldots, t_i = (\Delta t) i, \ldots, t_N = T)$,
    for some partition length $\Delta t$.
For brevity, we abbreviate $X_{t_i}$ as $X_i$ and similarly for most  other variables.

\begin{figure}
    \centering
    \subfloat[Optimal Distribution\label{fig:distopt}]{
        \includegraphics[width=0.47\linewidth]{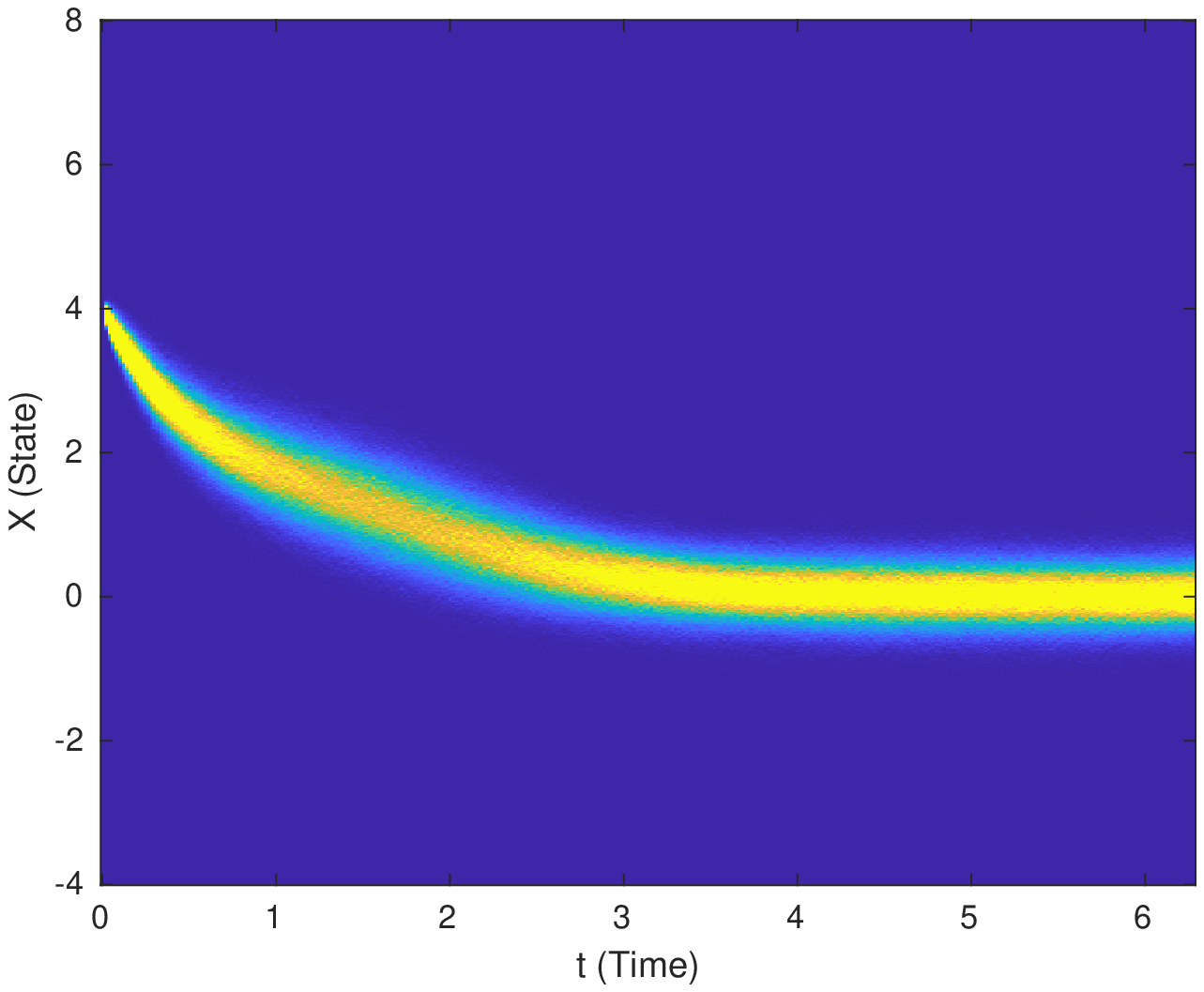}} 
    \subfloat[Parallel-Sampled Suboptimal\label{fig:distsubopt}]{
        \includegraphics[width=0.47\linewidth]{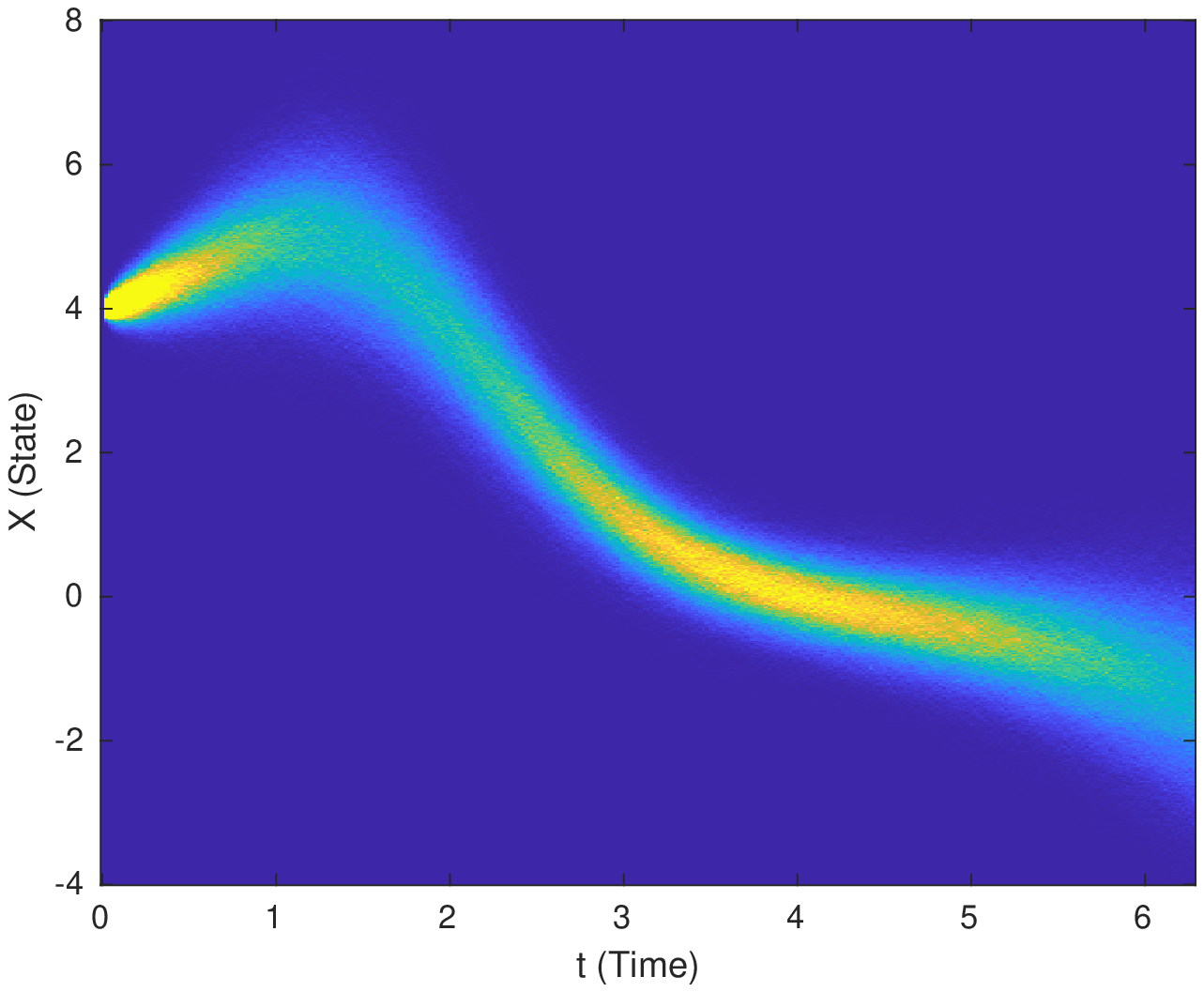}} \\
    \subfloat[RRT-Sampled ($\P$)\label{fig:distrrt}]{
        \includegraphics[width=0.47\linewidth]{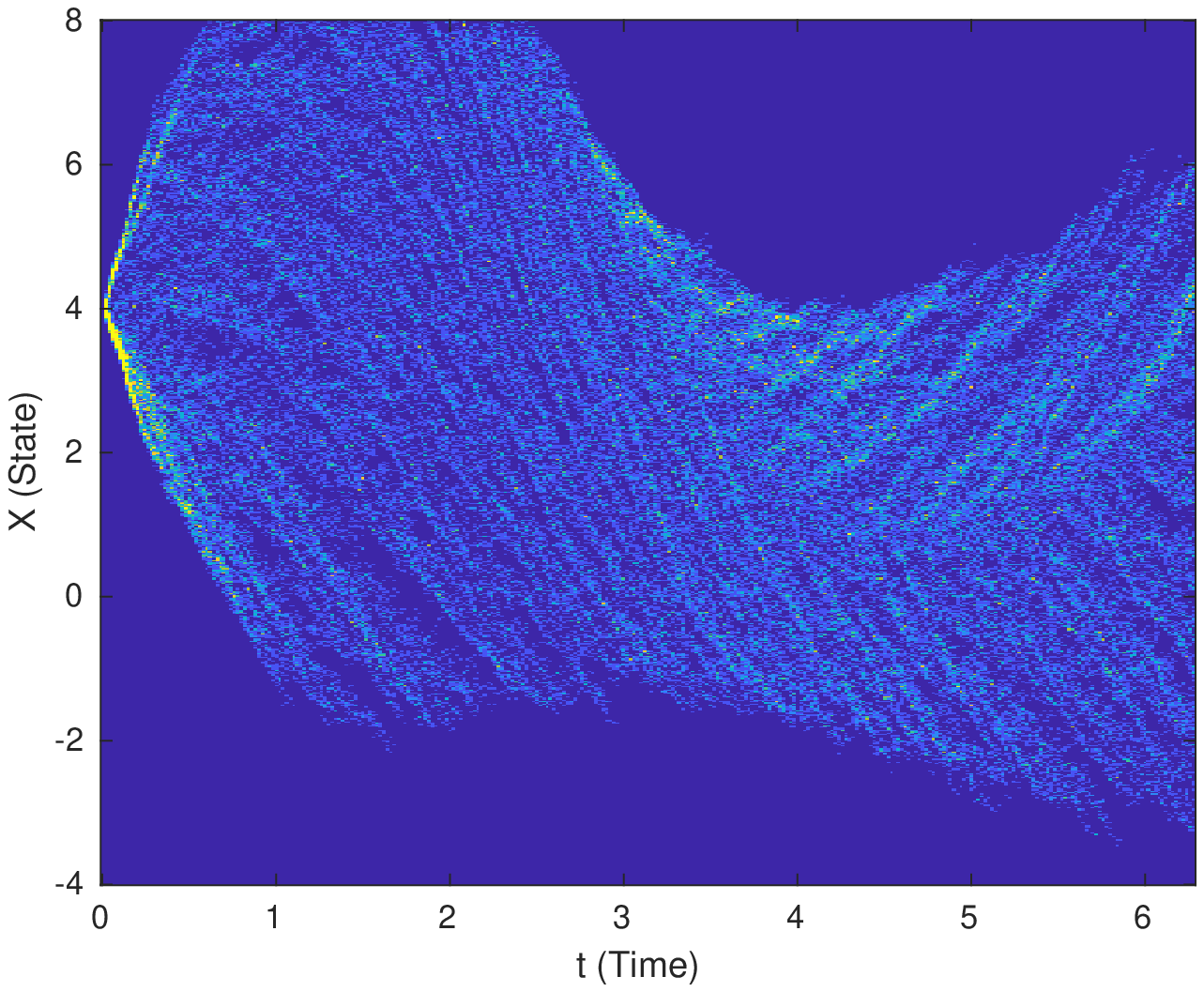}} 
    \subfloat[RRT-Sampled, Weighted ($\pR$)\label{fig:distweight}]{
        \includegraphics[width=0.47\linewidth]{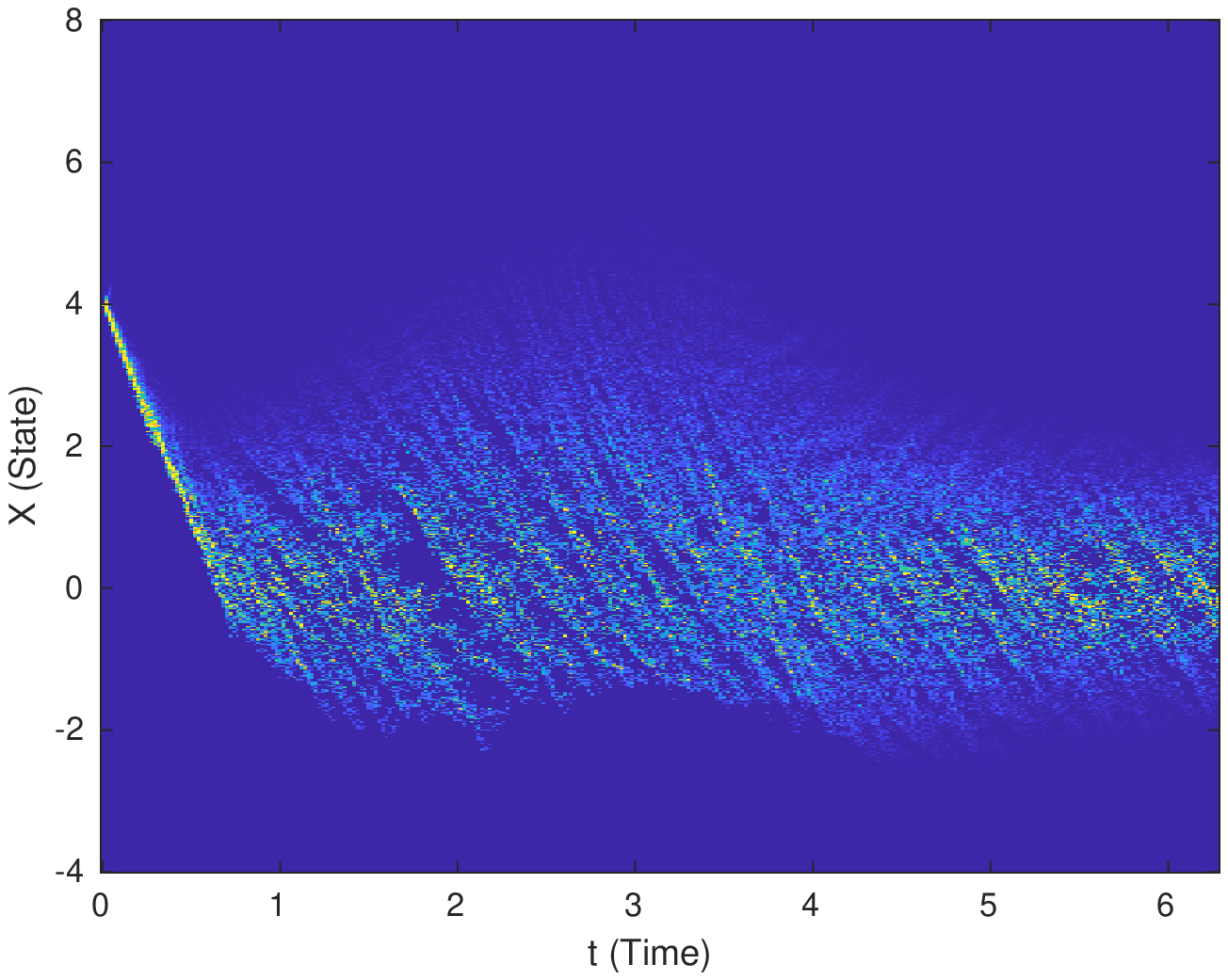}}
    \caption{Heatmap of different measure distributions for a $1$-dimensional SOC problem, 
        illustrating how RRT-sampling and local-entropy weighing can accelerate
        discovery of the optimal distribution.}
    \label{fig:distributions}
\end{figure}

Fig.~\ref{fig:distributions} motivates our approach, illustrating how the method is able to 
    rapidly find the optimal distribution. 
An on-policy method assumes knowledge of an initial suboptimal control policy, sampled as  in Fig.~\ref{fig:distributions}\subref{fig:distsubopt} using the approach in \cite{Exarchos2018} and the suboptimal value function is solved in that distribution. 
The on-policy method
requires iterative improvement of the policy to produce a distribution which overlaps with the optimal distribution.
However, if we begin with a sampling measure which broadly explores the state space as in Fig.~\ref{fig:distributions}\subref{fig:distrrt}, we can produce an informed heuristic which weighs this distribution as in Fig.~\ref{fig:distributions}\subref{fig:distweight}, so that the function approximation is concentrated in a near-optimal distribution.
Thus, often, we need only one iteration to get a good approximation of the optimal value function and policy.

In Section~\ref{sec:SDEBranched} we summarize the construction of a stochastically sampled tree, as
    a generalized data structure
    to approximate the FSDE distribution over the partition $t_0, t_1,\ldots,t_N$, and 
then, in Section~\ref{sec:McKeanMarkov}, we
demonstrate  how this data structure can be interpreted as a series of McKean-Markov path measures
    $\{\fP_i\}_{i=0}^N$ to approximate the forward sampling distributions. 
    Finally, in Section~\ref{sec:pathlsmc}, we discuss how these measures can be used in the backward pass to approximate
    the BSDE solution by estimating the optimal value function.

\subsection{Forward SDE Branched Sampling}
\label{sec:SDEBranched}

We begin by discussing the construction of a tree data structure $\mathcal{G}$ 
    representing the FSDE \eqref{eq:driftfsde}.
In this section we only describe how edges of $\mathcal{G}$ are added and what data is stored.
    Later, in Section~\ref{sec:kinorrt}, we propose a specific methodology for selecting nodes
    for expansion and choosing the drift value.
The tree is initialized with a root node at the initial starte $x_0$ and is
    constructed asynchronously as long as new nodes and directed edges are added using the
    following procedure.
    
Let \mbox{$x^\text{parent}_i \in \R^n$} be a state node in the tree at time $i$ selected for expansion, as
    the parent of a new edge.
The drift $k_i \sim \kappa(x^\text{parent}_i,\{x^k_i\}_k)$ 
(representing the random variable $K_i$)
is sampled from some random function $\kappa$
    which can depend on both the state and the distribution of the nodes at that time.
Independently, the noise is sampled $w_i \sim \mathcal{N}(0,\Delta t I_n)$.
The child state node is computed using an Euler-Maruyama SDE step approximation of the
    FSDE \eqref{eq:driftfsde},
\begin{align}
    x^\text{child}_{i+1} = x^\text{parent}_i + k_i \Delta t + \sigma(t_i, x^\text{parent}_{i}) w_i
    \text{.}
    \label{eq:eulermaruyama}
\end{align}
The edge $(x^\text{parent}_i,d^\text{data}_i,x^\text{child}_{i+1})$ is added to the tree, where
    \mbox{$d^\text{data}_i = (k_i, w_i, \ldots)$} is the data attached to the edge. 
    A new parent 
    can then be selected for expansion, including selecting the same parent again.
\mbox{Fig.~\ref{fig:parallelvsweighted}~(a-b)} illustrates the branching tree data structure.

\subsection{McKean-Markov Measure Representation}
\label{sec:McKeanMarkov}

\begin{figure}
    \centering
    \subfloat[][Branch-Sampled ($\fP_{i}$)]{%
        \includegraphics[width=0.43\linewidth]{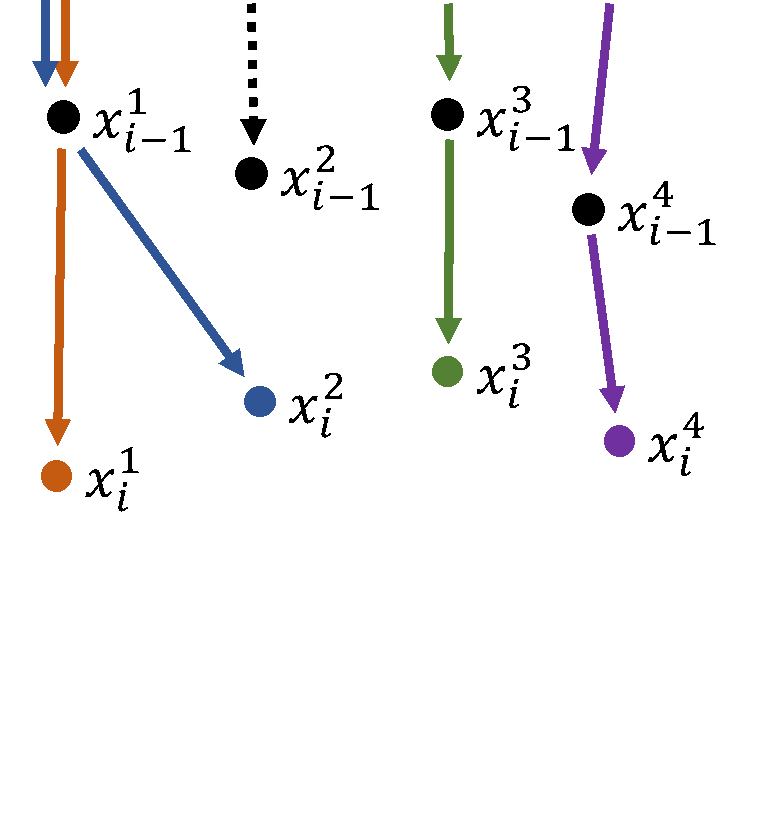} \label{fig:branchsampi}} 
    \subfloat[][Branch-Sampled ($\fP_{i+1}$)] {%
        \includegraphics[width=0.43\linewidth]{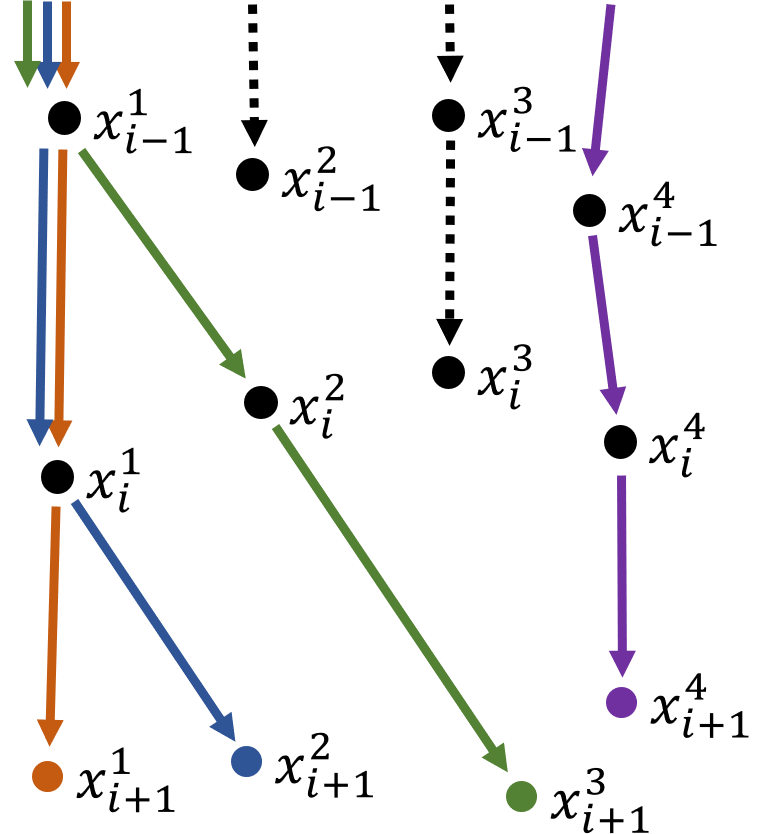} \label{fig:branchsampip1}} \\
    \subfloat[][Parallel-Sampled ]{%
        \includegraphics[width=0.43\linewidth]{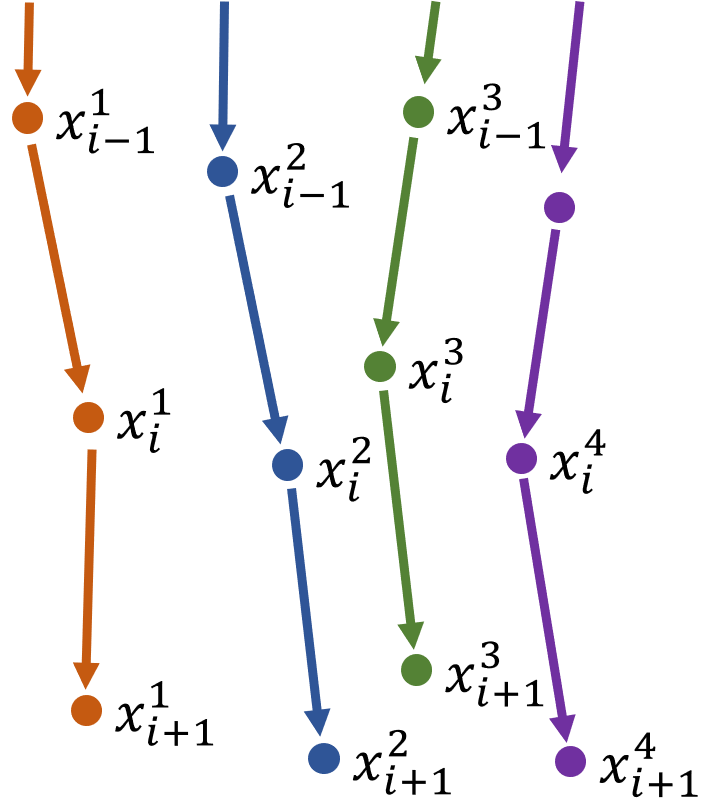} \label{fig:parallel}} 
    \subfloat[][Weighted-Branched ($\bR_{i+1}$)]{%
        \includegraphics[width=0.43\linewidth]{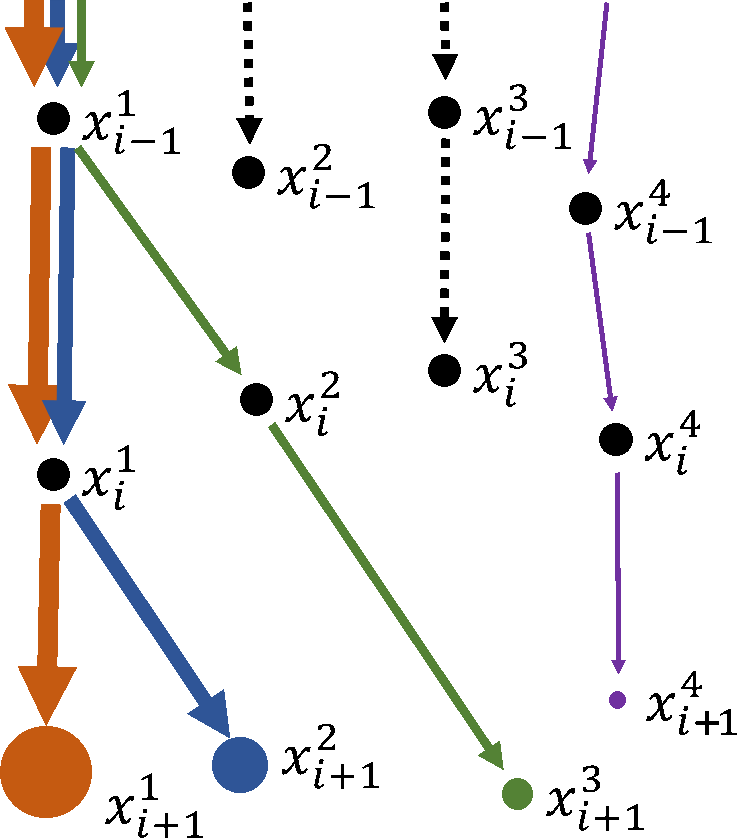}} 
    \caption{ \textbf{(a-b)} Illustration of how the branch sampled measures are represented
        based on the underlying data structure.
        The colored paths show the collection of paths representing the respective measure.
        Dotted lines represent edges in the data structure which are not included in the path measure
        for that time step.
        \mbox{\textbf{(c-d)}} Comparison of the unweighted parallel-sampling method from previous approaches
    to the proposed weighted and branch-sampled method.}
    \label{fig:parallelvsweighted}
\end{figure}

We approximate the continuous-time sampling distributions with discrete-time 
    McKean-Markov branch sampled paths, as presented in \cite{del2013mean}.
The tree data structure $\mathcal{G}$
    represents a series of path measures $\{\fP_i\}_{i=0}^N$,
    each approximating the distribution
$
    \fP_i \approx \P_{t_i} \circ \xi_i^{-1} ,
$
    where $\xi_i$ is the discrete-time random path defined as
$
    \xi_i := (X_0, \mathcal{D}_0, X_1, \cdots, \mathcal{D}_{i-1}, X_i),
$
    and $\xi_i^{-1}$ is the inverse map from events on the path space to events on the sample
    space $\Omega$ \cite[Chapter~3]{resnick2003probability}.
Here, we use $\mathcal{D}_i$ to refer to the set of 
    random variables associated 
    with the edges of the tree, 
    including $K_i$ and $W^\P_i$.
The empirical measure approximations are defined as
\begin{align}
    \fPi &:=
    \frac{1}{M} \sum_{j=1}^M \delta_{\xi_i^j} \text{,}
\end{align}
    where $\delta$ is the Dirac-delta measure acting on sample paths
\begin{align}
    \xi_i^j &:= (x_{0,i}^j, d_{0,i}^j, 
    x_{1,i}^j, d_{1,i}^j, 
    \ldots, d_{i-1,i}^j, x_{i,i}^j).
\end{align}
The notation $x_{m,i}^j$ indicates that this element is the sample of a random variable
    $X_m$ that is the ancestor of sample $x_{i,i}^j$ in the path $\xi_i^j$, and similarly for
    the edge variables $\mathcal{D}_m$.
Each node in the tree $x^j_i$ (alternatively called a particle)
    is associated with a unique path $\xi_i^j$ whose final term
    is $x_{i,i}^j = x^j_i$.
Figures~\ref{fig:parallelvsweighted}(a)-(b)
    illustrate how each colored node at a particular time step is associated with its 
    matching colored path,
    and that all of these paths collectively constitute the path measure.

It is worth noting that in this construction there is no requirement for
    $\fP_i$ and $\fP_{i+1}$ to agree over the interval $[0,t_i]$.
This property is illustrated by the fact that, for example, the path ending at $x^3_i$ in
    \mbox{Fig.~\ref{fig:parallelvsweighted}(a)} 
    is represented in $\fP_{i}$ but not represented in $\fP_{i+1}$ in 
    \mbox{Fig.~\ref{fig:parallelvsweighted}(b)}.
To see why such constructions are permissible in the proposed numerical scheme, 
    notice that in the backward step 
    (e.g., in Theorems~\ref{thm:driftfbsde} and \ref{thm:weight}) the measure $\P_i$ is only 
    employed to compute the instantaneous conditional expectation; 
    thus, there is no  
    sample path matching required when taking $\tau = t_{i+1}$ and $t = t_i$ to 
    obtain $V^\mu(t_i,x)$, and when taking $\tau = t_i$ and $t = t_{i-1}$ to obtain $V^\mu(t_{i-1},x)$.

It can be observed in Fig.~\ref{fig:parallelvsweighted}(b) that 
some edges are multiply 
    represented in the distribution. 
If the drift term $K_i$ were restricted to be a deterministic function of $X_i$ (as is the case in \cite{exarchos2018stochastic,Exarchos2018,Exarchos2018a}), such a construction would represent 
    an unfaithful characterization of the path distribution because samples of the Brownian 
    process are independent, and thus should be sampled as in 
    Fig.~\ref{fig:parallelvsweighted}(c).
However, since $K_i$ itself is permitted to have a distribution, the overlapping of paths is justified as the drift having been selected so as to concentrate the paths in a certain part of the state space. 
Figure~\ref{fig:xk_joint_dist} illustrates why parallel sampling is naturally suited for representing 
    deterministic functions and why branch sampling is necessary for representing nontrivial joint 
    distributions ${(X_i,K_i)}$. 
    %
    
For any arbitrary function $G_i$ evaluated on path $\xi_i$, we have the almost sure convergence
\begin{align}
    \E_{\fP_i}[G_i(\xi_i)] = \sum_{j=1}^M \frac{1}{M} G_i(\xi^j_i) \rightarrow 
    \E_{\wt{\P}_i}[G_i(\xi_i)] \text{,} \label{eq:convexp}
\end{align}
    as the number of particles $M \rightarrow \infty$, where $\wt{\P}_i$ is the ideal discrete-time approximation of the distribution 
    $\P_i \circ \xi_i^{-1}$ under the
    Euler-Maruyama scheme \cite[Section~4.1.2]{del2013mean}.
It follows from the change of variables theorem \cite[Chapter~3, Theorem~3.6.1]{bogachev2007measure}
    that this expectation is exact up to the error due to time discretization
\begin{align*}
    \E_{\wt{\P}_i}[G_i(\xi_i)] 
    &\approx \int G_i(\xi_i) \, \P_i \circ \xi_i^{-1}(\mathrm{d} \xi_i) \\
    &= \int G_i(\xi_i(\omega)) \, \P_i (\mathrm{d} \omega) =: \E_{\P_i}[G_i(\xi_i)] \text{.}
\end{align*}
According to \cite[Chapter 10, Theorem 10.2.2]{kloeden2013numerical},
    when a linear growth condition in $x$ is imposed on $f^\mu_s$, $\sigma_s$, and $\ell^\mu_s$
    along with a few other conditions, the Euler-Maruyama scheme's error varies as  
    $\mathcal{O}((\Delta t)^{\sfrac{1}{2}})$.
When $\sigma_s$ is constant with respect to $x$, 
    the error bound improves to 
    $\mathcal{O}(\Delta t)$
    \cite[Chapter 10, Theorem 10.3.5]{kloeden2013numerical}.
Thus, our approximation converges with large numbers of particles and decreasing time intervals.

\begin{figure}
    \centering
        \includegraphics[width=0.97\linewidth]{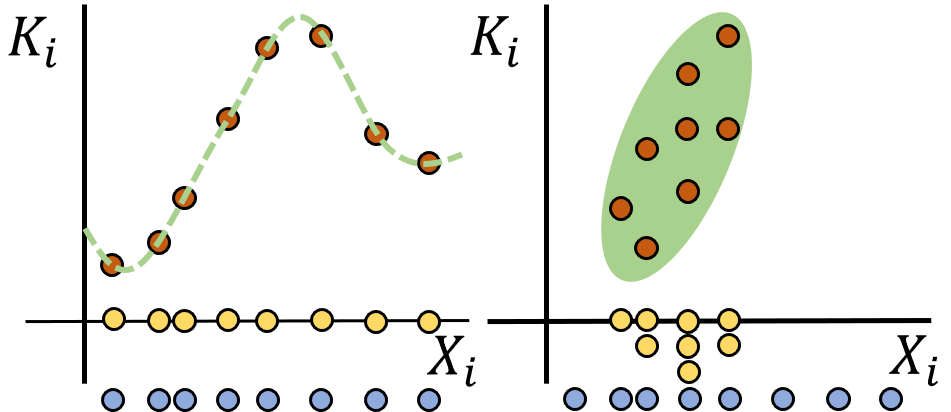}
        \caption{Comparison of the approximations of the joint distribution $\P_{(X_i,K_i)}$.
            \textbf{Left:} A parallel-sampling method is used to approximate a joint distribution
        wherein $K_i$ is a deterministic function of $X_i$.
        \textbf{Right:} A branch-sampling method is used to approximate a nontrivial joint
        distribution.
        The blue dots are the approximation of the distribution $\P_{X_i}$,
        the green curves are the ideal continuous distribution $\P_{(X_i,K_i)}$, 
        the red dots are its
        sampled approximation, and the yellow dots represent how many children that node
        in the tree will have.
        }
    \label{fig:xk_joint_dist}
\end{figure}

\subsection{Path Integral Least Squares Monte Carlo} \label{sec:pathlsmc}

To approximate the measure $\pR_{i+1}$ in Theorem~\ref{thm:weight} we use a 
    path integral-weighted measure 
\begin{align}
    \bR_{i+1} &:=
    \frac{1}{\eta} \sum_{j=1}^M \Theta^{j}_{i+1} \delta_{\xi_{i+1}^j} \text{,}
\end{align}
    where the weights for each heuristic value $\rho^j_{i+1}$ are 
\begin{align}
    \Theta^{j}_{i+1} = \exp(-\frac{1}{\lambda} \rho^j_{i+1}) \text{,}
    \label{eq:weighapprox}
\end{align}
    and $\eta$ is a normalizing constant.
The heuristic value is calculated as  
    \mbox{$\rho_{i+1}^j = \rho_{i+1}(\xi^j_{i+1})$}, taking care to exclude $w^j_{i,i+1}$
    so that its distribution remains Brownian.

In each step of the backward pass, we use $\bR_{i+1}$ and the
    value function approximation $V(x; \alpha_{i+1}) \approx V^\mu(t_{i+1}, x)$,
    parameterized by $\alpha_{i+1} \in \mathcal{A}$,
    where $\mathcal{A}$ is the parameter space, 
    to estimate the value function at the previous time step $V(x; \alpha_{i}) \approx V^\mu(t_{i}, x)$,
    by producing $\alpha_i \in \mathcal{A}$.
We assume that the parameterizarion $V(x; \alpha)$ results in a function that is $C^{2}$ for all $\alpha \in \mathcal{A}$, and
 approximate the optimization in \eqref{eq:thmcontinfk4} as
\begin{align}
    \alpha_i^* &= \argmin_{\alpha_i \in \mathcal{A}} 
    \E_{\bR_{i+1}}[(\wh{Y}_{i,i+1} - V(X_i;\alpha_i))^2] \nonumber \\
    &= \argmin_{\alpha_i \in \mathcal{A}} 
    \E_{\fP_{i+1}}[\Theta^j_{i+1} (\wh{Y}_{i,i+1} - V(X_i;\alpha_i))^2] \nonumber \\
    &= \argmin_{\alpha_i \in \mathcal{A}} 
    \frac{1}{\eta} \sum_{k=1}^M \Theta^{j}_{i+1} (\wh{y}^j_{i,i+1} - V(x^j_i;\alpha_i))^2 \text{,}
    \label{eq:alphasolve}
\end{align}
    where 
    \eqref{eq:bsdediff2} is approximated as
\begin{align}
    \wh{y}^j_{i,i+1} &\approx V(x_{i+1}^j;\alpha_{i+1}) + 
    (\ell^{\mu j}_i + z^{j \top}_{i+1} d^j_i) \Delta t  \text{.} 
    \label{eq:bsdediffapprox}
\end{align}

The novelty of this method over classic LSMC \cite{Longstaff2001}, developed for
    parallel-sampled paths, comes from (a) the observation that introducing the drift
    process $K_i$ with a non-trivial joint distribution $\P_{(X_i,K_i)}$
    validates the choice of branch-sampled path distributions;
    (b) we can weigh regression points using a heuristic that acts on the entire path,
    not just the immediate states; and
    (c) weighing as in \eqref{eq:weighapprox} has a particular interpretation as the selection
    of a measure with desirable properties for robustness using \eqref{eq:MinRelativeEntropyMeasure}.

\section{Forward-Backward RRT-SDE} \label{sec:fbrrt}

In this section, we present a novel algorithm (FBRRT-SDE) that uses rapidly exploring random trees to construct the graph $\mathcal{G}$ of samples for solving the corresponding system of FBSDEs.
The \mbox{FBRRT-SDE} algorithm is a particular numerical application of the generalized theory presented in Section \ref{sec:branchpathlsmc}.
The ultimate goal of the FBRRT-SDE algorithm is to produce the set of parameters $\{ \alpha_i \}_{i=1}^N$ which approximate the optimal value function as $V^*(t_i, x) \approx V(x;\alpha_i)$.
This is achieved by
generating a forward pass to produce a graph representation
$\mathcal{G}$
of the path measures $\{ \fP_i \}_{i=1}^N$.
Given that the optimal policy has the form~\eqref{eq:optpoli},
    we define the target policy
\begin{align}
    &\mu_i(x;\alpha_{i+1}) \label{eq:optpolicy2} \\
    &\quad = \argmin_{u \in U} \{ \ell(t_i,x,u) +  f(t_i,x,u)^\top \partial_x V(x;\alpha_{i+1}) \}
    \text{,} \nonumber
\end{align}
    so that it coincides with the optimal control policy
    when the value function approximation is exact.
The backward pass uses $\mathcal{G}$, $\mu_i$, and $\rho_{i+1}$ to produce $\alpha_i$, backwards
    in time.
    At each iteration $k$ of the algorithm, 
the policy cost $\ol{J}_k := \E[S_0[x_0, \mu]]$ 
associated with a set of parameterized policies is evaluated
    by sampling a parallel-sampled set of trajectories and computing the mean cost.
At the end of each iteration,
    nodes with high heuristic value $\rho_{i+1}$ are pruned from the tree $\mathcal{G}$,
    and new nodes are added in the forward pass in the next iteration.
This outer loop of the FBRRT-SDE algorithm is summarized in Algorithm~\ref{alg:fbsde}.

\begin{algorithm}
\caption{Forward-Backward RRT-SDE}\label{alg:fbsde}
\begin{algorithmic}[1]
    \Procedure{FBRRT-SDE}{$x_0$}
        \State $\wt{\mathcal{G}}$.init($\xi_0$)
        \For{$k = 1,\cdots,N_\text{iter}$} 
            \State $\mathcal{G} \gets$ 
                \Call{ForwardPass}{$\wt{\mathcal{G}},(\alpha_i)_i$}
            \Statex \Comment{Generate tree which represents $\{\fP_i\}_i$}
            \State $(\alpha_i)_i \gets$ \Call{BackwardPass}{$\mathcal{G}$} 
            \Statex \Comment{Approximate value functions $\{V(\cdot;\alpha_i)\}_i$}
            \State $\ol{J}_{k} \gets$ \Call{PolicyCost}{$x_0,(\alpha_i)_i$}
            \Statex \Comment{Evaluate computed policy 
            $\{\mu_i(\cdot; \alpha_{i+1})\}_i$}
            \State $\wt{\mathcal{G}} \gets$ \Call{Erode}{$\mathcal{G},(\alpha_i)_i$}
            \Statex \Comment{Prune tree to remove suboptimal paths}
        \EndFor
        \State \textbf{return} $(\alpha_i)_i$
    \EndProcedure
\end{algorithmic}
\end{algorithm}

\subsection{Kinodynamic RRT Forward Sampling} \label{sec:kinorrt}

We desire sampling methods that seek to explore the whole state space,
thus
    increasing the likelihood of sampling in the proximity of optimal trajectories.
For this reason, we chose a method inspired by kinodynamic RRT~\cite{lavalle2001randomized}.
The selection procedure for this method ensures that the distribution of the chosen
    particles is more uniformly distributed in a user-supplied region of interest,
 is
    more likely to select particles
    which explore the empty space, and is therefore less likely to oversample dense
    clusters of particles.

With some probability ${\eps^{\text{rrt}}_i \in [0,1]}$ we choose the RRT
    sampling procedure, 
    but otherwise we choose a particle uniformly from $\{x_i^{j}\}_{j=1}^M$, 
    each particle having equal weight.
This ensures that dense particle clusters will still receive more attention.
The choice of the parameter $\eps^{\text{rrt}}_i$ balances exploring the state space
    against refining the area around the current distribution.

For the drift values, that is, those sampled from the distribution $\kappa$ left unspecified in
    Section~\ref{sec:SDEBranched},
    we again choose a random combination of exploration and exploitation.
For exploitation we choose
\begin{align}
    K_i &= f(t_i, X_i, \mu_i(X_i;\alpha_i)) \text{,}
    \label{eq:koptdrift} 
\end{align}
and for exploration we choose
\begin{align}
    K_i &= f(t_i, X_i, u^\text{rand}) \text{,}
\end{align}
    where the control is sampled randomly from a user-supplied set
    $u^\text{rand} \sim U^\text{rand}$.
For example, for minimum fuel ($L_1$) problems where the control is bounded as $u \in [-1, 1]$ and
    the running cost is $L = |u|$, we select $U^\text{rand} = \{ -1, 0, 1 \}$
    because the policy \eqref{eq:optpolicy2} 
    is guaranteed to only return values in this discrete set.

Algorithm~\ref{alg:rrtbranchsamp} summarizes the implementation of 
    the RRT-based sampling procedure that produces the forward sampling tree $\mathcal{G}$.
The algorithm takes as input any tree with width $\wt{M}$ 
    and adds nodes at each depth until the
    width is $M$, the desired tree width.
In the first iteration there are no value function estimate parameters available
    to exploit,
    so we set $\eps^\text{rrt}=1$ to maximize exploration using RRT sampling.


\begin{algorithm}
\caption{RRT Branched-Sampling}\label{alg:rrtbranchsamp}
\begin{algorithmic}[1]
    \Procedure{ForwardPass}{$\mathcal{G},(\alpha_1,\ldots,\alpha_N)$}
    \For{$k = \wt{M}+1,\cdots,M $ } \Comment{Add node each loop}
        \For{$i = 0,\cdots,N-1 $ } \Comment{For each time step}
            \State $\{x^j_i\}_j \gets \mathcal{G}\text{.nodesAtTime}(i)$
            \If{$\eps^\text{rrt} > \kappa^\text{rrt} \sim \text{Uniform}([0,1])$}
                \State $x_i^{\text{rand}} \sim \text{Uniform}(\mathcal{X}^\text{roi})$
                \State $(x_i^{\text{near}},j^{\text{near}}) \gets 
                \text{Nearest}(\{x^j_i\}_j, x_i^{\text{rand}})$ \label{line:nearest}
            \Else
                \State $(x_i^{\text{near}},j^{\text{near}}) \sim \text{Uniform}(\{x^j_i\}_j)$
            \EndIf \Comment{$j^{\text{near}}$ is index of selected node}
            \If{$\eps^\text{opt} > \kappa^\text{opt} \sim \text{Uniform}([0,1])$}
                \State $u_i \gets \mu_i(x_i^{\text{near}};\alpha_{i+1})$ 
                \Comment{\eqref{eq:optpolicy2}}
            \Else
                \State $u_i \sim U^\text{rand}$
            \EndIf
            \State $k_i \gets f(t_i, x_i^\text{near}, u_i)$
            \State $w_i \sim \mathcal{N}(0,\Delta t I_{n})$
            \State $x_{i+1}^\text{next} \gets x_i^{\text{near}} + 
            k_i \Delta t + \sigma(t_i,x_i^{\text{near}}) w_i$ 
            \State $j^\text{next} \gets 
            \mathcal{G}\text{.addEdge}(i,j^\text{near}, (x^{\text{near}}_{i}, 
                k_i, x_{i+1}^\text{next}))$
            \State $\overrightarrow{\ell}_{0:i-1} 
                \gets \mathcal{G}\text{.getRunCost}(i-1, j^{\text{near}})$ 
            \State $\overrightarrow{\ell}_{0:i} 
                \gets \overrightarrow{\ell}_{0:i-1} 
                + \ell_i(x_i^{\text{near}}, u_i) \Delta t$ \label{line:pathint}
            \State $\mathcal{G}\text{.setRunCost}(i, j^\text{next}, 
                \overrightarrow{\ell}_{0:i})$
        \EndFor
    \EndFor
    \State \textbf{return} $\mathcal{G}$
    \EndProcedure
\end{algorithmic}
\end{algorithm}

\subsection{Path-Integral Dynamic Programming Heuristic} \label{sec:locentbw}

Next, we propose a heuristic design choice for the backward pass weighting variables
    $\rho_{i+1}$, and justify this choice with theoretical analysis.
A good heuristic will give large weights to paths likely to have low values over the whole
    interval $[0,T]$.
Thus, in the middle of the interval we care both about the current running cost and the expected cost.
A dynamic programming principle result
    following directly from 
    \cite[Chapter~4, Corollary~7.2]{fleming2006controlled}
    indicates that
\begin{align*}
    &V^*(0,x_0) =  \\
&\quad \min_{u[0,t_{i+1}]} E_{\P_{i+1}^{u}}[\int_{0}^{t_{i+1}} \ell(s,X_s,u_s) \, \ds 
    + V^*(t_{i+1},X_{i+1})] \text{,}
\end{align*}
    where $u_{[0,t_{i+1}]}$ is any control process in $U$ on the interval $[0,t_{i+1}]$ and
    $\P_{i+1}^{u}$ is the measure produced by the drift 
    $K_s = f(s,X_s,u_s)$.
Following this minimization, we choose the heuristic to be the discrete approximation of
\begin{align}
    \rho_{i+1} = \int_{0}^{t_{i+1}} \ell(s,X_s,u_s) \, \ds 
    + V^*(t_{i+1},X_{i+1}) \text{,}
    \label{eq:rhodef}
\end{align}
    where $u_{[0,t_{i+1}]}$ is chosen identically to how the control for the drift is produced.
    
Although the theory up to this point does not require $K_s$ to be a feasible drift under the 
    dynamic constraints of the SOC problem,
    for the proposed FBRRT-SDE algorithm the drift is always chosen as $K_s = f(s,X_s,u)$ for some randomly
    selected $u \in U$. 
In practice, the running cost is approximated by Euler-Maruyama, and the optimal value function
    is approximated by the latest estimate of the value function $V(x;\alpha_{i+1})$.
The running cost is computed in the forward sampling
    in line~\ref{line:pathint} of Algorithm~\ref{alg:rrtbranchsamp}.

\begin{algorithm}
\caption{Local Entropy Weighted LSMC Backward Pass}\label{alg:bptt}
\begin{algorithmic}[1]
    \Procedure{BackwardPass}{$\mathcal{G}$}
    \State $\{\xi^j_N\}_j \gets \mathcal{G}\text{.pathsAtTime}(N)$
    \State $\{x^j_N\}_j \gets \{\xi^j_N\}_j$
    \State $y_N \gets [g(x^1_N) \; \cdots \; g(x^M_N)]^\top$
    \State $\alpha_N \gets 
        \argmin_\alpha \sum_j \Theta_{N} (\widehat{y}^j_{N} - \Phi(x^j_{N}) \alpha)^2$ 
    \For{$i = N-1,\cdots,1$} \Comment{For each time step}
        \State $\{\xi^j_{i+1} \}_j 
        \gets \mathcal{G}\text{.pathsAtTime}(i+1)$
        \For{$j = 1,\cdots,M$} 
            \Comment{For each path}
            \State $(x^j_{i}, k^j_{i}, x^j_{i+1})  
                \gets \xi^j_{i+1}$
            \Comment{$x^j_{i} = x^j_{i,i+1}$, etc.}
            \State $y^j_{i+1} 
                \gets \Phi(x^{j}_{i+1}) \alpha_{i+1}$ 
            \Comment{\eqref{eq:thmcontinfk3}}
            \State $z^j_{i+1} 
                \gets \sigma^\top_{i+1}(x^j_{i+1}) 
                \partial_x \Phi(x^{j}_{i+1}) \alpha_{i+1}$ 
            \Comment{\eqref{eq:zv}}
            \State $\mu^j_{i} 
            \gets \mu_{i}(x^j_{i}; \alpha_{i+1})$
            \Comment{\eqref{eq:optpolicy2}}
            \State $d^j_{i} 
            \gets \sigma^{-1}_{i+1}(x^j_{i+1}) (f^\mu_i - k_i^j)$ 
            \State $\wh{y}^j_i \gets 
            y^j_{i+1} + (\ell^\mu_i 
            + z^{j \top}_{i+1} d^j_{i})\Delta t$ 
            \Comment{\eqref{eq:bsdediff2}}
            \State $\overrightarrow{\ell}_{0:i} 
                \gets \mathcal{G}\text{.getRunCost}(i, j)$ 
            \State $\rho^j_{i+1} \gets 
            y^j_{i+1} + \overrightarrow{\ell}_{0:i}$ 
            \Comment{\eqref{eq:rhodef}}
        \EndFor
        \State $\rho_{i+1} \gets \rho_{i+1} 
            - \min_j \{ \rho^j_{i+1} \}$ \label{st:expcond}
        \Comment{$\exp$ conditioning}
        \State $\Theta_{i+1} \gets \exp(-\sfrac{1}{\lambda}\rho_{i+1})$
        \Comment{\eqref{eq:changemeas}}
        \State $\alpha_{i} \gets 
        \argmin_\alpha \sum_j \Theta^j_{i+1} 
        (\widehat{y}^j_{i} - \Phi(x^j_i) \alpha)^2$ 
        \Comment{\eqref{eq:thmcontinfk4}} \label{line:lsmc}
    \EndFor
    \State \textbf{return} $(\alpha_1,\ldots,\alpha_N)$
    \EndProcedure
\end{algorithmic}
\end{algorithm}

Algorithm~\ref{alg:bptt} details the implementation of the backward pass with
    local entropy weighting.
Line~\ref{st:expcond} does not, theoretically, have an effect on the optimization,
    since it will come out of the exponential as a constant multiplier,
    but it has the potential to improve the numerical conditioning of the 
    exponential function computation
    as discussed in \cite[Chapter~5, equation (6.33)]{Goodfellow-et-al-2016}.
The $\lambda$ value is, in general, a parameter which must be selected by the user.
For some problems we choose to search over a series of possible $\lambda$
    parameters, evaluating each one with a backward pass and using the
    one that produces the smallest expected cost over a batch of trajectory rollouts
    executing the computed policy.

\subsection{Path Integral Erode} \label{sec:patherode}

After the backward pass of the algorithm we obtain updated approximations of the value
    function $\{V(\cdot;\alpha_i)\}_i$ 
    along with the tree $\mathcal{G}$ that represents the forward sampling
    path measures $\{\fP_i\}_i$.
To improve our approximation, we can use our value function estimates to create
    a new tree $\mathcal{G}'$ with
    new forward sampling measures $\{\fP'_i\}_i$ via the heuristic $\rho_i$ in \eqref{eq:rhodef}.

We have found experimentally that sampling a new tree from scratch is both wasteful
    and shows signs of catastrophic forgetting.
That is, the subsequent backward pass performs worse, since it has lost data samples
    which were important to form good function estimates.
On the other hand, simply adding more samples to the current tree can prove to be 
    unsustainable in the long run.
To keep the time complexity constant between iterations, we propose to bound 
    the number of samples at each time step.
After each
    backward pass we remove as many samples as were added in the forward pass,
    ``eroding'' the tree before the forward pass ``expands'' it.


The algorithm starts with a tree of width $M$ and ends with a tree
    of width $\wt{M}$ at every depth.
We begin at the end of the trajectory $i = N$ and remove the nodes $\{x_N^j\}_{j=1}^M$ 
    with highest $\rho^j_{N}$ value until there are only $\wt{M}$ nodes 
    left at depth $N$.
We proceed in a similar fashion backwards down the tree, removing nodes with high $\rho^j_i$
    values.
However, due to the tree structure of the path measures,
    if we remove nodes that have children we disconnect the paths and ruin the assumed
    structure.
Thus, we only remove nodes that have no children.
The implementation of this algorithm is detailed in Algorithm~\ref{alg:eroderrt}.


\begin{algorithm}
\caption{Path Integral Erode}\label{alg:eroderrt}
\begin{algorithmic}[1]
    \Procedure{Erode}{$\mathcal{G},(\alpha_i)_i$}
    \For{$i = N,\cdots,1$} \Comment{For each time step}
        \State $\{\rho^j_{i}\}_j \gets 
        \mathcal{G}\text{.getHeuristics}(t_{i})$
        \ForAll{$j' \in \text{sortDescending}(\{\rho^j_{i}\}_j)$ }
            \If{$\mathcal{G}\text{.hasNoChildren}(x^{j'}_{i})$}
                \State $\mathcal{G}\text{.removeParentEdge}(x^{j'}_{i})$
                \State $\mathcal{G}\text{.removeNode}(x^{j'}_{i})$
            \EndIf
            \If{$\mathcal{G}\text{.numNodes}(t_{i}) = \wt{M}$}
                \State \textbf{break}
            \EndIf
        \EndFor
    \EndFor
    \State \textbf{return} $\mathcal{G}$
    \EndProcedure
\end{algorithmic}
\end{algorithm}

\subsection{Function Approximation}

In our implementation of the FBRRT-SDE algorithm,
    the value function is represented by 2nd order multivariate Chebyshev polynomials.
Specifically, we use all products of the basis functions 
    $\bigcup_{j=1}^n \{1, x_j, 2x_j^2-1\}$
    with polynomial degree 2 or lower, namely,
\begin{align*}
    \Phi(x) := (1, x_1, \ldots, x_n, 2x_1^2-1, \ldots, 2x_n^2-1, \\
    x_1 x_2, \ldots, x_1 x_n, x_2 x_3, \ldots, x_2, x_n, \cdots, x_{n-1} x_n) \text{.}
\end{align*}
For better conditioning, points are first normalized to the interval $[-1,1]^n$
    based on a parameterized region of interest to obtain the basis functions
    ${\Phi(\ldots,(x_j-a^\text{offset}_j)/a^\text{scale}_j,\ldots)}$.

\subsection{Computational Complexity}

The computational complexity of the forward pass is dominated by the nearest neighbors
    search.
In our implementation we use brute force search so the complexity of
    line~\ref{line:nearest} in Algorithm~\ref{alg:rrtbranchsamp} is $\mathcal{O}(M)$,
    but the complexity
   can be lowered to $\mathcal{O}(\log(M))$  if a kd-tree is used for nearest neighbor search, instead.
Thus, the forward pass complexity is $\mathcal{O}(N M^2)$ (or $\mathcal{O}(N M \log(M))$ with a kd-tree).

The backward pass complexity is dominated by the least squares regression at line~\ref{line:lsmc}
    in Algorithm~\ref{alg:bptt}.
A QR-decomposition solver is used for the regression, so if $k$ is the number of basis functions used in $\Phi(x)$, and
    $M \gg k$ then its complexity is $\mathcal{O}(M k^2)$.
Thus, the complexity of the backward pass is $\mathcal{O}(N M k^2)$.

\section{Numerical Examples} \label{sec:numresults}

We evaluated the FBRRT-SDE algorithm by applying it to three challenging nonlinear stochastic
    optimal control problems.
For all three problems, the terminal cost is taken to be a quadratic function centered at the origin
    and the running cost is taken to be an $L_1$/min-fuel cost,
    which makes these problems challenging for traditional solution methods.
For the first two problems the control input was restricted in the interval $U = [-1, 1]$.
For the third problem the control is restricted to $U = [-1, 1]^2$.
    
\subsection{$L_1$ Double Integrator}
    
\begin{figure*}
    \centering
    \subfloat[Parallel-sampled FBSDE \cite{Exarchos2018}]{
        \includegraphics[width=0.35\textwidth]{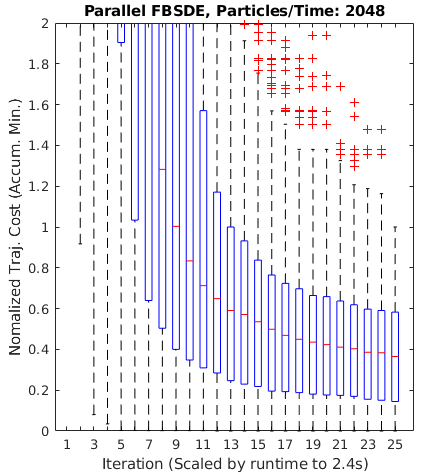}}
    \centering
    \subfloat[FBRRT-SDE]{
        \includegraphics[width=0.35\textwidth]{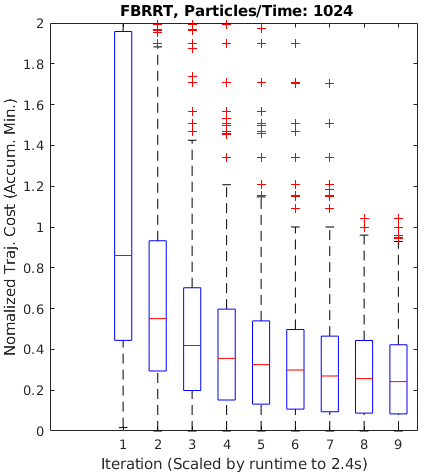}}
        \\
    \centering
    \subfloat[Trajectory samples for parallel-sampled FBSDE]{
        \includegraphics[width=0.37\linewidth]{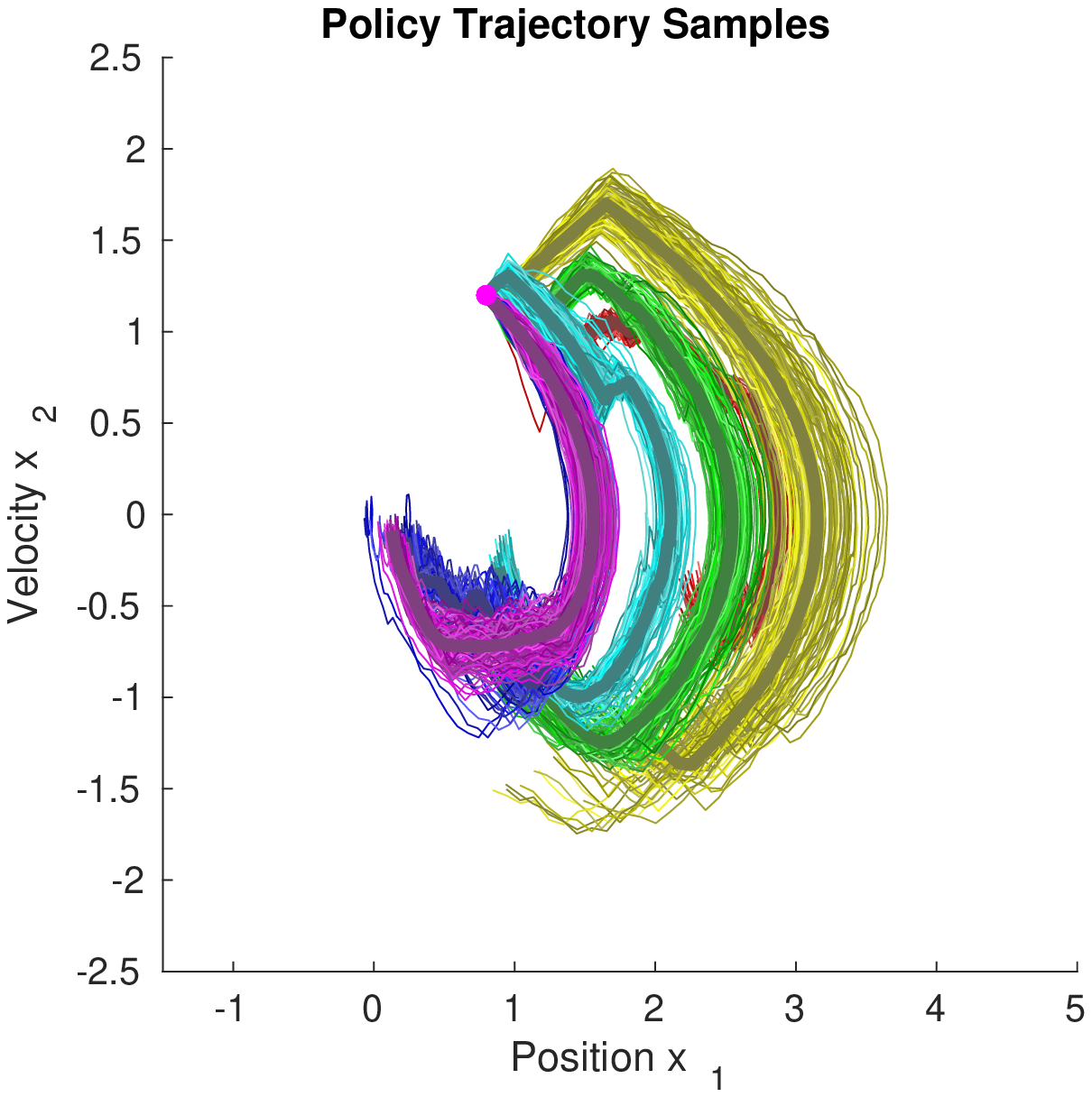}}
    \centering
    \subfloat[Trajectory samples for FBRRT-SDE]{
        \includegraphics[width=0.37\linewidth]{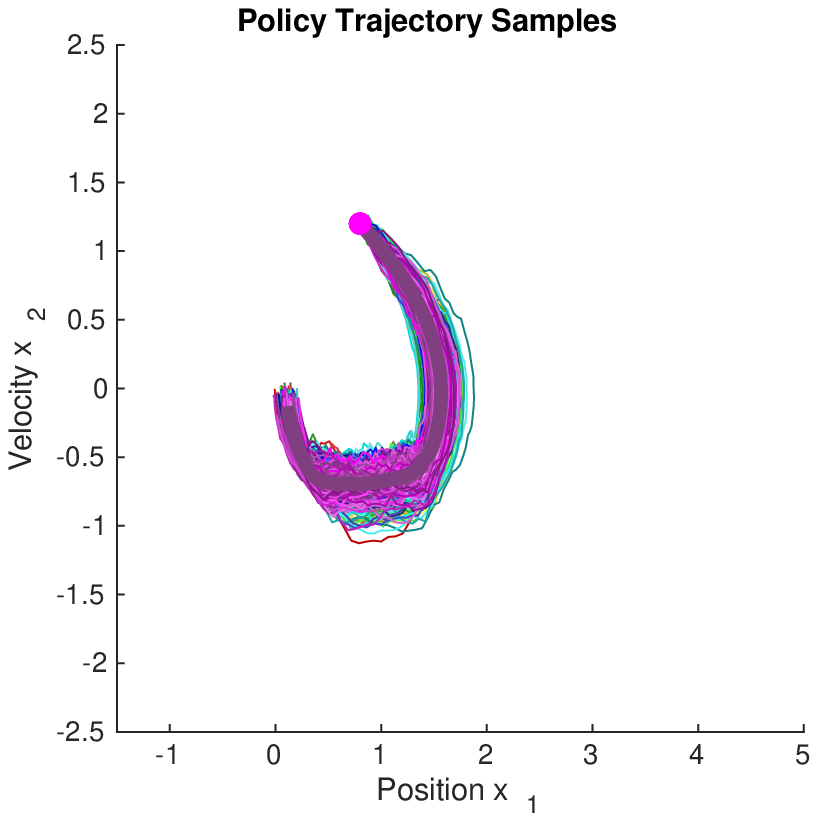}}
    \caption{\textbf{(a-b)} Comparison of parallel-sampled FBSDE \cite{Exarchos2018} and FBRRT-SDE 
         for the $L_1$ double integrator problem for
         random initial states. Expected trajectory costs for the computed policies
         are normalized across different initial conditions.
         The relative timings of iterations are visualized by their placement on the x-axis,
         were the right edge of each figure represents $2.4$s into runtime.
         \textbf{(c-d)} Trajectory samples from policies
         generated after the first 6 iterations.
         The first iteration is colored red, followed by yellow, green, cyan, dark blue, 
         and magenta. Thick lines are mean trajectories.}
\label{fig:dblintbatch}
\end{figure*}

In order to compare the proposed FBRRT-SDE algorithm to the  parallel sampled techniques in \cite{Exarchos2018}, which we denote below as parallel-sampled FBSDE,
we considered the double integrator system
with
\begin{align}
 \begin{bmatrix}
\dX_{s}^{(1)}\\
\dX_{s}^{(2)}
    \end{bmatrix} \nonumber
    &=
    \begin{bmatrix}
X_{s}^{(2)}\\
u
    \end{bmatrix}\ds+
    \begin{bmatrix}
0.01 & 0\\
0 & 0.1
        \end{bmatrix}
        \begin{bmatrix}
\dW_{s}^{(1)}\\
\dW_{s}^{(2)}
    \end{bmatrix} \text{,}
\end{align}
    with $L_1$ running cost, i.e.,
\begin{align}
    \inf_{u_{[0,T]}}  \E_{{\Q}}\bigg [\, \int_0^T c_0 |u_s| \ds 
        + \sum_{j=1}^n c_j (X_T^{(j)})^2 \,\bigg ] \text{,}
\end{align}    
    where $c_0, c_1, c_2$ are scalar parameters.
When the system starts with positive position and velocity, the optimal policy is
    to decelerate to a negative velocity, coast for a period of time so that fuel
    is not used, and then accelerate to reach the origin.
For this example, the number of particles per time step is $M=1,024$, the number of time steps is $N=64$,
    and the erode particle number is $\wt{M}=M/2$.



As shown in Figs.~\ref{fig:dblintbatch}(c)-(d), 
    the parallel-sampled FBSDE takes a significant number of iterations
    to begin converging to the near optimal policy, while the proposed method produces a
    near-optimal policy at the first iteration.
The algorithm converges to the optimal policy.



We also compared the convergence speed and robustness of the two methods by 
    randomly sampling different starting
    states and evaluated their relative performance over a number of trials.
For each of $30$ random initial states $x_0$ we ran $20$ trials of each method
    for a number of iterations, each iteration producing an expected cost for
    the computed policy.
We normalized the final costs across the initial states by dividing all costs for a particular
    initial state by the largest cost obtained across both methods.
For each iteration, we assign the value of the accumulated minimum value across previous
    iterations for that trial, i.e., the value is the current best cost after running
    that many iterations, regardless of the current cost.
We aggregated these values across initial states and trials into the box plots in
    Fig.~\ref{fig:dblintbatch}.
Since the FBRRT-SDE is significantly slower than the parallel-sampled FBSDE per iteration due to the nearest
    neighbors calculation,
    we scale each iteration by the runtime.
Note that every iteration of FBRRT-SDE after the first one requires approximately half the runtime,
    since only half of the eroded tree needs resampling.
In summary, the FBRRT-SDE converges faster using fewer iterations than the parallel-sampled FBSDE, 
    and does so with half as many particle samples.

\subsection{$L_1$ Double Inverted Pendulum}

\begin{figure}
    \centering
    \subfloat[Mean cost distribution for $x_0^\text{vert}$]{
        \includegraphics[width=0.97\linewidth]{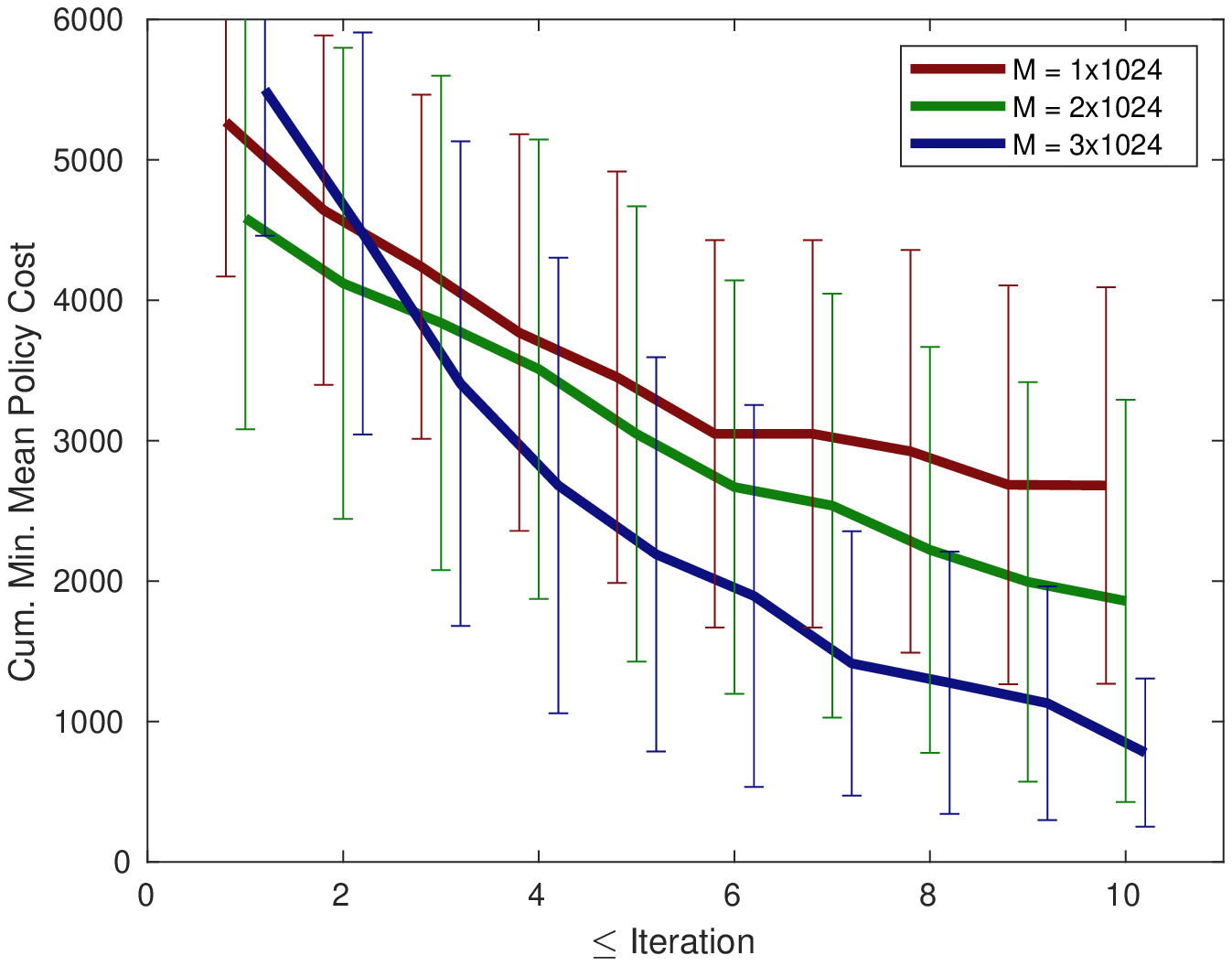}}
    \\
    \subfloat[Mean cost distribution for $x_0^\text{off}$]{
        \includegraphics[width=0.97\linewidth]{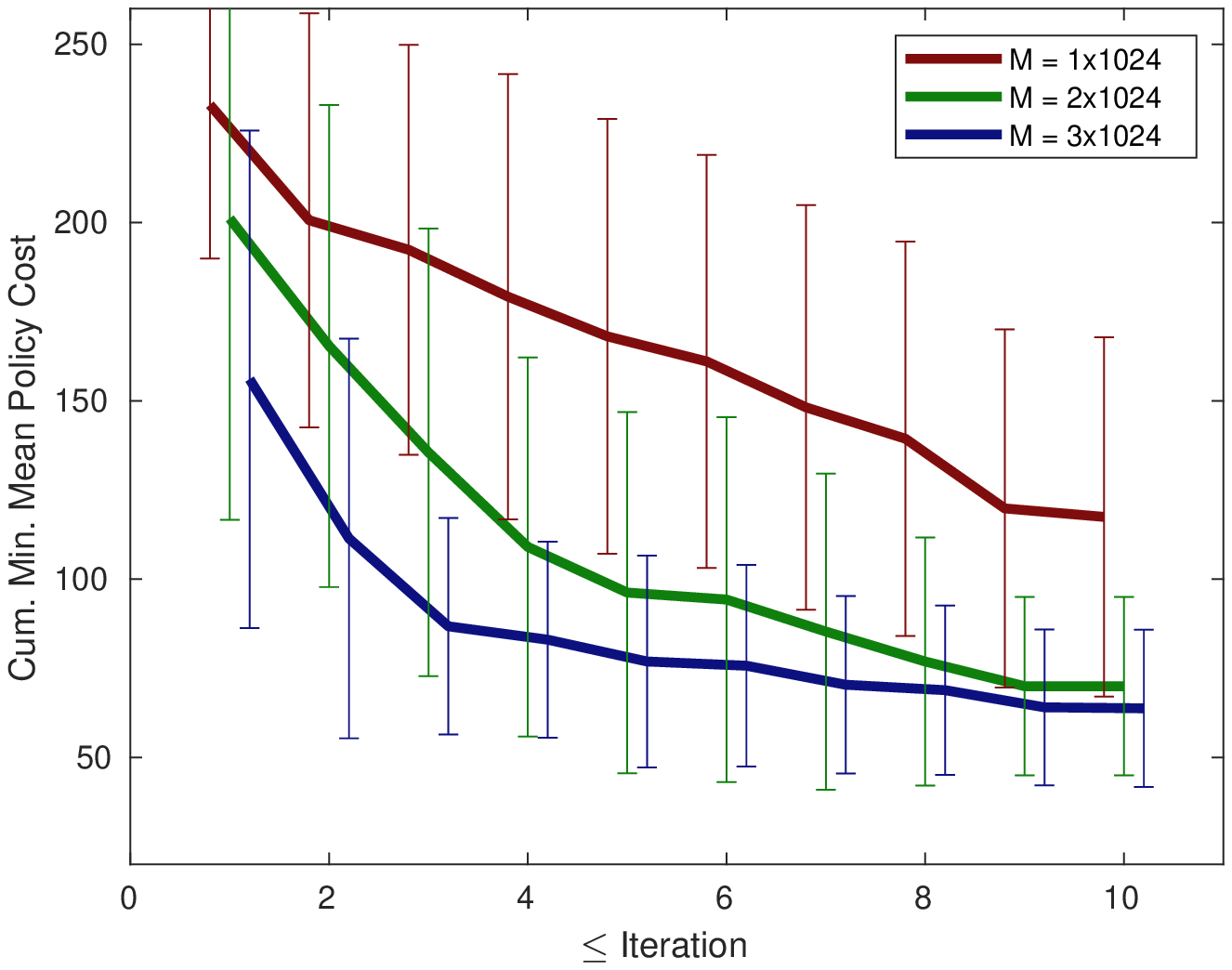}}
        \caption{Mean policy cost statistics for $L_1$ double inverted pendulum problem.
             The mean bars and standard deviation whiskers characterize the distribution over 30 trials, 
             where the value
             for each iteration is the accumulated minimum of the values over all previous
             iterations in that trial up to and including that iteration.
             $M$ particles are used per time step in each condition.
            }
        \label{fig:dpendstats}
\end{figure}
In order to study the proposed FBRRT-SDE algorithm on a highly nonlinear system in higher dimensions, consider the double inverted pendulum with state space dimension \mbox{$n=4$} 
presented in \cite{tedrake2009underactuated}, but with added damping friction to the joints.
Thus, the dynamics are in the form $\dX_s = f(X_s, u_s) \, \ds + \sigma \, \dW_s$, where
$f$ is given in \eqref{eq:DoubleInvertedPendulumF}, 
where $d_0 = 10,d_1 = 0.37,d_2 = 0.14,d_3 = 0.14, f_1 = 4.9,f_2 = 5.5,f_3 = 0.1, f_4 = 0.1$
    are scalar parameters of the system,
and where $\sigma = \mathrm{diag}[  0.03,0.03,0.18,0.18]$.
The associated optimal control problem is
\begin{align}
\hspace*{-2mm}
    \inf_{u_{[0,T]}}  \E_{{\Q}} \bigg [\, \int_0^T c_0 |u_s| \ds 
        + \sum_{j=1}^n c_j (X_T^{(j)})^2 \,\bigg ] \text{,}
\end{align}
    where $c_0,c_1,c_2,c_3,c_4$ are scalar parameters.
With our unoptimized
implementation (where brute force search is used for nearest neighbors)
    this example takes approximately $10$~sec to complete
    the first iteration and $80$~sec to complete 10 iterations for $M = 1 \times 1024$
    ($11$~sec and $85$~sec respectively for $M = 3 \times 1024$).
\begin{figure*}
\begin{multline}
\label{eq:DoubleInvertedPendulumF}
f\left(x,u\right)\equiv f\left(\left[\begin{array}{cccc}
\alpha & \beta & \omega & \psi\end{array}\right]^{\top},u\right)
\\
=\left[\begin{array}{c}
\omega\\
\psi\\
\frac{d_{3}\left(d_{2}\psi^{2}\sin\beta+2d_{2}\omega\psi\sin\beta-f_{3}\omega+f_{2}\sin(\alpha+\beta)-f_{1}\sin\alpha\right)+d_{2}\cos\beta\left(d_{2}\omega^{2}\sin\beta+f_{4}\psi-f_{2}\sin(\alpha+\beta)\right)+d_0 d_{3}\,u}{d_{1}d_{3}+2d_{2}d_{3}\cos\beta-d_{2}^{2}\cos^{2}\beta}\\
\frac{-\left(d_{1}+2d_{2}\cos\beta\right)\left(d_{2}\omega^{2}\sin\beta+f_{4}\psi-f_{2}\sin(\alpha+\beta)\right)-d_{2}\cos\beta\left(d_{2}\psi^{2}\sin\beta+2d_{2}\omega\psi\sin\beta-f_{3}\omega+f_{2}\sin(\alpha+\beta)-f_{1}\sin\alpha\right)-d_0 d_{2}\cos\beta\,u}{d_{1}d_{3}+2d_{2}d_{3}\cos\beta-d_{2}^{2}\cos^{2}\beta}
\end{array}\right] .
\end{multline}
\end{figure*}

Two initial conditions were evaluated, $x_0^\text{vert} = [0,0,0,0]^\top$, where the bars are vertically down and motionless, and \mbox{$x_0^\text{off} = [\pi/10,\pi/10,0,0]^\top$}, where the angles of both bars are slightly perturbed from $x_0^\text{vert}$ by $18^\circ$.
The number of time steps is taken to be $N=80$ and the erode particle number is selected as $\wt{M}=(3/4)M$. The evaluation of these conditions over 30 trials with differing numbers of particles $M$ is provided in \mbox{Fig.~\ref{fig:dpendstats}}.

Since the initial conditions of the two experiments are close, their optimal values
    should also be close.
Despite having comparable optimal values, the $x_0^\text{off}$ condition
    converges far more rapidly than the $x_0^\text{vert}$ condition.
Slightly perturbing the initial condition vastly improved the performance of the algorithm
    for this problem.
The reason the $x_0^\text{vert}$ condition performs poorly is likely because the system is
    very sensitive in that region and a localized policy results in a bifurcation of trajectory
    densities.
If the differing groups of trajectories have similar heuristic values,
    the value function approximation tries to fit a function to groups of particles in
    different sides of the state space, resulting in poor accuracy for either group.
When the $x_0^\text{off}$ condition is used, there is less ambiguity in which trajectory distributions
    are near-optimal, 
    resulting in better performance.

\subsection{$L_1$ Linearized Quadcopter}

\begin{figure}
    \centering
    \includegraphics[width=0.97\linewidth]{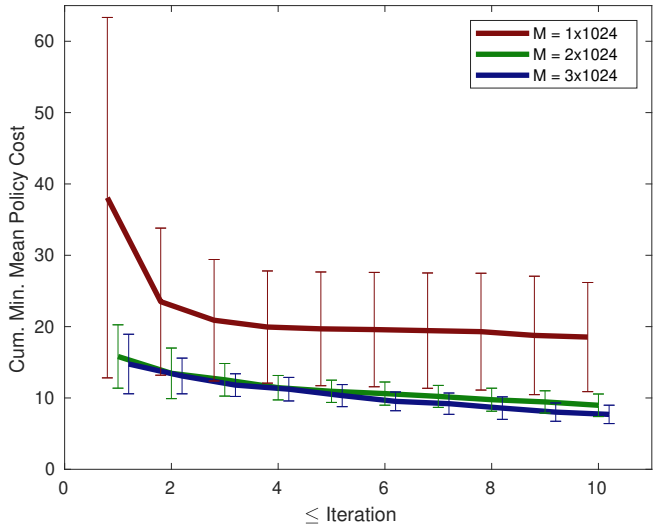}
        \caption{Mean policy cost statistics for $L_1$ linearized quadcopter problem.
             The mean bars and standard deviation whiskers characterize the distribution over 30 trials, 
             where the value
             for each iteration is the accumulated minimum of the values over all previous
             iterations in that trial up to and including that iteration.
             $M$ particles are used per time step in each condition.
            }
        \label{fig:quadstats}
\end{figure}

In order to demonstrate the proposed algorithm on a 
higher-dimensional system,
  we  considered the linearized dynamics of a quadcopter having state space dimension \mbox{$n=8$},
adapted from \cite{sabatino2015quadrotor}.
The dynamics are in the form $\dX_s = f(X_s, u_s) \, \ds + \sigma \, \dW_s$, where
\begin{align*}
f(x,u) &= f(
\begin{bmatrix}
\phi & \theta & p & q & u & v & x & y
\end{bmatrix}^\top,
\begin{bmatrix}
\tau_x & \tau_y
\end{bmatrix}^\top) \\
&=
\begin{bmatrix}
    p & q & d \tau_x & d \tau_y &
    -g \theta & g \phi & u & v
\end{bmatrix}^\top
\end{align*}
where $d = 4.1, g = 9.8$ are scalar parameters of the system,
and where $\sigma = \mathrm{diag}[10^{-5},10^{-5},0.2,0.2,0.002,0.002,10^{-5},10^{-5}]$.
The associated optimal control problem is
\begin{align}
\hspace*{-2mm}
    \inf_{u_{[0,T]}}  \E_{{\Q}} \bigg [\, \int_0^T |u^{(1)}_s| + |u^{(2)}_s| \ds 
        + \sum_{j=1}^n c_j (X_T^{(j)})^2 \,\bigg ] \text{,}
\end{align}
    where $c_1=c_2=c_3=c_4=c_5=c_6=1, c_7=c_8=100$ are scalar parameters.
With our unoptimized
implementation (where brute force search is used for nearest neighbors)
    this example takes approximately $5$~sec to complete
    the first iteration and $37$~sec to complete 10 iterations for $M = 1 \times 1024$
    ($18$~sec and $110$~sec respectively for $M = 3 \times 1024$).
    The results are shown in Fig.~\ref{fig:quadstats}.

For this example, most progress in terms of convergence occurs in the first iteration.
Adding more particles significantly improved progress both in the first iteration
    and after several iterations.
However, there were diminishing returns as the number of particles increased from $2 \times 1024$ to $3 \times 1024$,
    likely due to the fact that the first iteration is already near optimal.
Of interest is also the observation that the time required to solve  this 8-dimensional problem was not significantly longer
    than the previous 4-dimensional example.
This suggests that the time-complexity of the proposed method is more related to the complexity
    of the control problem itself rather 
    than the dimensionality of the state space.

\section{Conclusions and Future Work}

We have proposed a novel generalization of the FBSDE approach to solve
    stochastic optimal control problems, combining branched sampling techniques
    with weighted least squares function approximation to greatly expand the flexibility
    of these methods.
    By leveraging the efficient space-filling properties of RRT methods, we have demonstrated
    that our method significantly improves the convergence properties of previous FBSDE numerical methods.
We have shown how the proposed method works hand-in-hand with a
    local entropy-weighted LSMC method, concentrating function approximation in
    the regions where optimal trajectories are most likely to be dense.
We have demonstrated that FBRRT-SDE can generate feedback control policies for high-dimensional
    nonlinear stochastic optimal control problems.

Several of the design choices exposed by our approach offer significant
    opportunities for further research.
First, although in this paper we have employed the most basic of the RRT
    algorithms, there has been almost two decades of development in this field.
Employing more 
elaborate methods may improve the forward sampling even further.
In addition, in this paper we did not discuss state constraints or obstacles.
Since RRT methods are naturally designed to accommodate obstacles, the methods
    proposed here should be extendable to those problems as well.
For example, \cite{yong1999stochastic} reports a version of the Feynman-Kac
    theorem which involves FBSDEs with random stopping times and their
    relationship to boundary value PDEs.

Another area of research worth investigating is to find
other methods of
    value function representation.
In this paper, we use a rather simple parameterization of the value functions, 
    though this simplicity offers some distinct benefits.
    Specifically,
quadratic basis functions result in gradients that are linear, and policies that
    are typically stable.
In regions where particles are sparse, the convexity of the value function representation
    naturally drives the system back towards the particle distribution.
Further investigations might reveal other value function parameterizations 
with potentially better representation power than quadratic functions, 
while maintaining the benefits and the nice properties of quadratic basis functions.

\bibliographystyle{IEEEtran}
\bibliography{library}


\begin{IEEEbiography}[{\includegraphics[width=1in,clip,keepaspectratio]{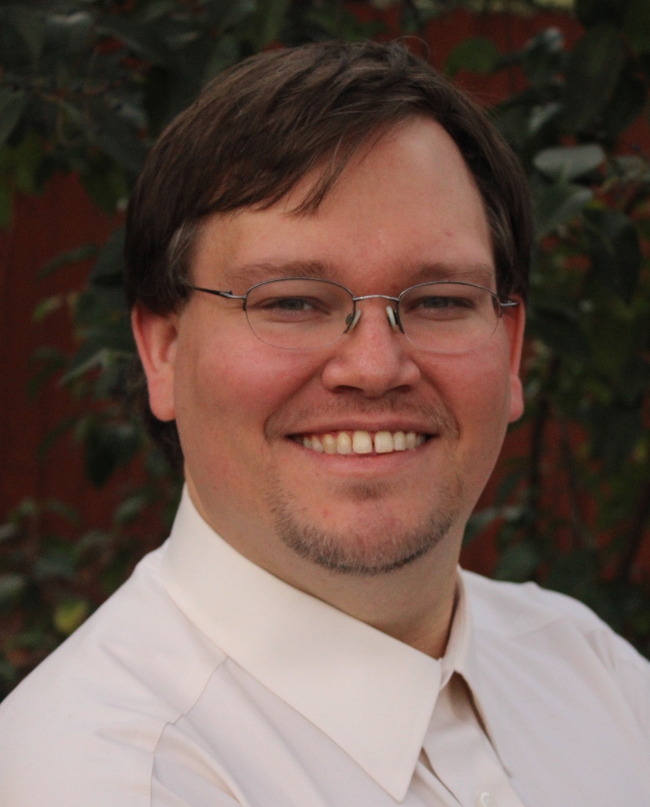}}]{Kelsey P. Hawkins}
received B.S. degrees in Applied Mathematics and Computer Science from North Carolina State University in 2010 and the M.S. degree in Computer Science
from the Georgia Institute of Technology in 2012. 
He is now a Ph.D. candidate in Robotics at the Georgia Institute of Technology in the Institute for Robotics and Intelligent Machines. 
His research interests include stochastic optimal control, differential games, and safe human-robot control design, with applications in autonomous driving, industrial robotics, and healthcare robotics.
\end{IEEEbiography}

\begin{IEEEbiography}[{\includegraphics[width=1in,height=1.25in,clip,keepaspectratio]{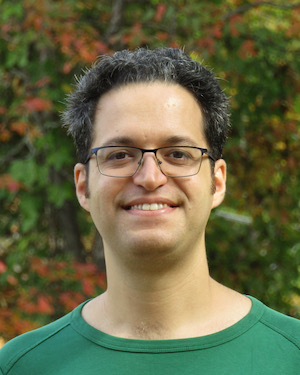}}]{Ali Pakniyat}(M'14) received the B.Sc. degree in Mechanical Engineering from Shiraz University in 2008, the M.Sc. degree in Mechanical Engineering 
from Sharif University of Technology in 2010, and the Ph.D. degree in Electrical Engineering from McGill University in 2016. After holding a Lecturer position at the Electrical and Computer Engineering department of McGill University, a postdoctoral research position in the department of Mechanical Engineering at the University of~Michigan, Ann Arbor, and a postdoctoral research position in the Institute for Robotics and Intelligent Machines at Georgia Institute of Technology, he joined the department of Mechanical Engineering at the University of Alabama in 2021 as an Assistant Professor. His research interests include deterministic and stochastic optimal control, nonlinear and hybrid systems, analytical mechanics and chaos, with applications in the automotive industry, robotics, sensors and actuators, and mathematical finance.
\end{IEEEbiography}

\begin{IEEEbiography}[{\includegraphics[width=1in,height=1.25in,clip,keepaspectratio]{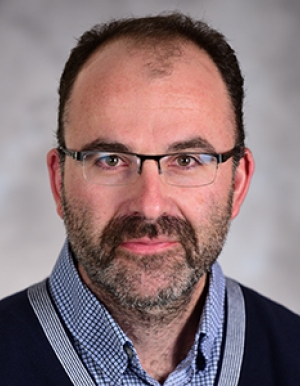}}]{Evangelos Theodorou} is an associate professor in the Daniel Guggenheim School of
Aerospace Engineering at the Georgia Institute
of Technology. He is also affiliated with the
Institute of Robotics and Intelligent Machines.
Theodorou earned a Bachelors and Masters degree in Electrical Engineering from the Technical
University of Crete, Greece, a Masters degree
in Computer Science and Engineering from the
University of Minnesota, and a a Ph.D. in Computer Science from the University of Southern
California. His research interests span the areas of stochastic optimal
control, robotics, machine learning, and computational neuroscience.
\end{IEEEbiography}

\begin{IEEEbiography}[{\includegraphics[width=1in,height=1.25in,clip,keepaspectratio]{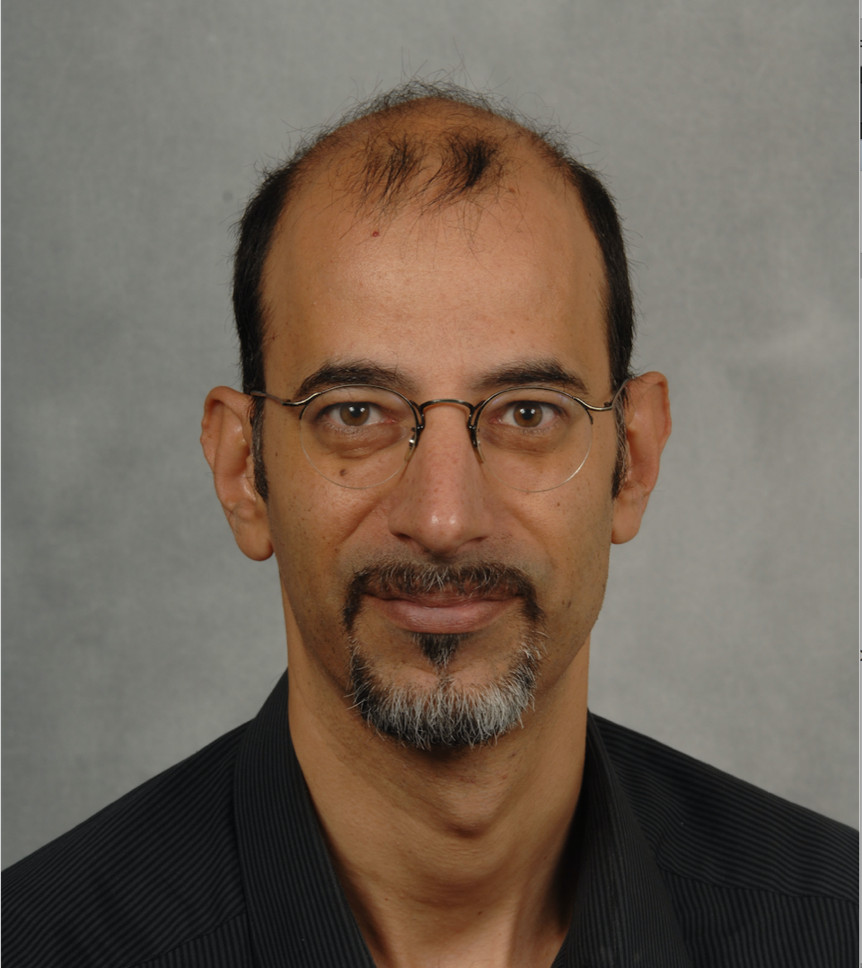}}]{Panagiotis Tsiotras} (F'19)
	is the David and Andrew Lewis Chair Professor in the Daniel Guggenheim School of Aerospace Engineering at the Georgia Institute of Technology (Georgia Tech).
	He holds degrees in Aerospace Engineering, Mechanical Engineering, and Mathematics.
	He has held visiting research appointments at MIT, JPL, INRIA Rocquencourt, and Mines ParisTech. 
	His research interests include optimal control of nonlinear systems and ground, aerial and space vehicle autonomy. He has served in the Editorial Boards of the Transactions on Automatic Control, the IEEE Control Systems Magazine, the AIAA Journal of Guidance, Control and Dynamics, the Dynamic Games and Applications, and Dynamics and Control. 
	He is the recipient of the NSF CAREER award, the Outstanding Aerospace Engineer award from Purdue, and the Technical Excellence Award in Aerospace Control from IEEE. 
	He is a Fellow of AIAA, IEEE, and AAS.
\end{IEEEbiography}

\end{document}